\def\essinf{\mathop{\textrm{ess\,inf}}}
\def\esssup{\mathop{\textrm{ess\,sup}}}
\def\pO{\partial\Omega}
\def\divv{\mathrm{div}\,}
\newtheorem{theorem}{Theorem}[section]
\newtheorem{lemma}[theorem]{Lemma}
\newtheorem{remark}[theorem]{Remark}
\newtheorem{definition}[theorem]{Definition}
\newtheorem{example}[theorem]{Example}
\numberwithin{equation}{section}
\title{S+ property for divergence operator $-\divv a(z, u(z), \nabla u(z))$ with $p(\cdot)$ - growth condition and applications}
\author{
  Krzysztof Winowski\footnote{.}\\
  Jagiellonian University,\\
  Faculty of Mathematics and Computer Science,\\
  ul. {\L}ojasiewicza 6,\
  30-348 Krak\'ow, Poland\\
  email: krzysztof.winowski@im.uj.edu.pl\\
\\
}
\begin{document}
\maketitle
\begin{abstract}
We establish a result which provides the criteria for a divergence operator dependent both on the gradient and the function itself, to satisfy the S+ property. The result is complemented by several examples and an application.
\end{abstract}
\section{Introduction}
We begin the paper from reminding the definition of S+ property. 
\begin{definition}[S+ property]
	Let $X$ be a normed space and $A: X \to X^{*}$ be an operator. We say that the operator $A$ has S+ property if for every sequence $\{x_{n}\}_{n \in \mathbb{N}} \subset X$ and for every $x \in  X$ if
	$$x_{n} \rightharpoonup x \quad \textrm{weakly in} \quad X$$
	and
	$$\limsup_{n \to +\infty} \langle A(x_{n}), x_{n} - x\rangle \le 0$$ 
	then
	$$x_{n} \to x \quad \textrm{strongly in} \quad X.$$
\end{definition}
The above property is a valuable tool in analysis of boundary value problems. We continue by presenting some of its applications.
 
    Consider the following problem (see for example  \cite{Zeidler}, p. 589), where $X$ is a real separable and reflexive Banach space with dim X = $+\infty$ and $A: X \to X^{*}$ is an operator. Assume that $b \in X^{*}$ and consider the following abstract equation
$$Au = b.$$
Assuming that the operator $A$ is bounded, pseudomonotone and coercive (see Definitions \ref{Appendix def 1}, \ref{Appendix def 2}, \ref{Appendix def 3} in Appendix) and
using the Brezis theorem (see \cite{Zeidler}, p. 589) one can boost the weak convergence of the  Galerkin method to get the strong convergence (see \cite{Zeidler}, p. 589) under the additional assumption that the operator $A$ has S+ property. 

   	In \cite{Gas Papa 1} Gasi\'nski and Papageorgiou consider the following Robin boundary value problem
\begin{equation}\label{I eq1}
P(\lambda)\quad \left\{
\begin{array}{l}
-\Delta_{p}u(z) = \lambda f(z, u(z)) \quad \textrm{in} \quad \Omega,
\\
\displaystyle
\frac{\partial u}{\partial n_{p}} + \beta(z)u^{p - 1} = 0 \quad \textrm{on} \quad \partial \Omega,
\\
\lambda > 0, u \ge 0, 1 < p < +\infty.
\end{array}
\right.
\end{equation}
They define the set $L = \{\lambda > 0:\quad \textrm{problem}\quad P(\lambda)\quad \textrm{has a positive solution}\}$ (see p. 14) and under the certain assumptions on the set $\Omega$ and  the functions $f$ and $\beta$ they prove in Proposition 4.4 that  $L \ne \emptyset$. They define  $\lambda^{*} = \sup L$ (see p. 20) and prove in Proposition 4.8 that $\lambda^{*} < +\infty$. Using the S+ property of the operator
 $A: W^{1, p}(\Omega) \to W^{1, p}(\Omega)^{*}$ defined by
 $$\langle A(u), v \rangle = \int_{\Omega}|\nabla u(z)|^{p - 2}(\nabla u(z), \nabla v(z))_{\mathbb{R}^{N}}dz \quad \forall u, v \in W^{1, p}(\Omega)$$
  they show in Proposition 4.9 that $\lambda^{*} \in L$. Moreover using the same S+ property for the operator $A$ they prove in Proposition 4.10 that the problem $P(\lambda)$ has the smallest solution.

   	In \cite{Napoli Mariani} Napoli and Mariani consider the following Dirichlet problem
\begin{equation}\label{I eq2}
\left\{
\begin{array}{l}
-\divv a(z, \nabla u(z)) =  f(z, u(z)) \quad \textrm{in} \quad \Omega
\\
\displaystyle
x = 0 \quad \textrm{on} \quad \partial \Omega.
\end{array}
\right.
\end{equation}
Under the certain conditions on the set $\Omega$ and the functions $a$ and $f$ they prove in Theorem 3.1 that problem (\ref{I eq2}) has at least one nontrivial weak solution using the Mountain Pass Theorem. An important step in their proof was to consider the energy functional $J$ of this problem 
and to check in Lemma 3.1 that the functional $J$ satisfies the Palais-Smale condition. To do this step they use the S+ property for the Fr\'{e}chet derivative $J'$ (see Proposition 2.1).

    In \cite{Gas Papa} Gasi\'nski and Papageorgiou consider the following hemivariational inequality
\begin{equation}\label{I eq3}
\left\{
\begin{array}{l}
-\mathrm{div}\, a(z, u(z), \nabla u(z))\ - \partial j(z, u(z)) \ni\ f(z, u(z), \nabla u(z))
\quad\textrm{a.e. on}\ \Omega,\\
\displaystyle
x = 0
\quad\textrm{on}\ \partial \Omega.
\end{array}
\right.
\end{equation} 
They prove in Theorem 4.4.1 that under the certain assumptions on the set $\Omega$ and the functions $a$, $j$ and $f$ problem (\ref{I eq3}) has a solution $x \in W^{1, p}_{0}(\Omega)$. They introduce (see p. 561, 567, 568) the following three multivalued operators (see Appendix for the explanation of the notation)
$$V: W_{0}^{1, p}(\Omega) \to P_{wkc}(W^{-1, p'}(\Omega))$$
$$V(x) = \{-\divv v: v \in S^{p'}_{a(\cdot), x(\cdot), \nabla x(\cdot)}\} \quad \forall u \in W^{1, p}_{0}(\Omega)$$
$$N_{f}: W^{1, p}_{0}(\Omega) \to L^{p'}(\Omega)$$ 
$$N_{f}(x) = f(\cdot, x(\cdot), \nabla x(\cdot)) \quad \forall u \in W^{1, p}_{0}(\Omega)$$
$$G: W^{1, p}_{0}(\Omega) \to P_{wkc}(W^{-1, p^{'}}(\Omega))$$ 
$$G(x) = S^{r'}_{\partial j(\cdot), x(\cdot)} \quad \forall u \in W^{1, p}_{0}(\Omega).$$
Using these operators they define the multivalued operator $R: W^{1, p}_{0}(\Omega) \to P_{wkc}(W^{-1, p'}(\Omega))$ given by
$$R(u) = V(u) - G(u) - N_{f}(u)\quad \forall u \in W^{1, p}_{0}(\Omega).$$
They prove in Proposition 4.4.1 that the operator $V$ has S+ property and used this fact to prove that the operator $R$ is generalized pseudomonotone which is an important step in their proof of solution existence.

     We deal with the case of the divergence type operators in the case of space dependent exponent. Let us consider the operator
$A:  W^{1, p(\cdot)}(\Omega) \to  W^{1, p(\cdot)}(\Omega)^{*}$ defined by
$$\langle A(u), v \rangle = \int_{\Omega}(a(z, u(z), \nabla u(z)), \nabla v(z))_{\mathbb{R}^{N}}dz \quad \textrm{for all} \quad u, v \in W^{1, p(\cdot)}(\Omega).$$
   The main aim of this paper is to give assumptions which guarantee that the operator $A$ has S+  property. Such operator $A$ is a natural generalization of many operators arising in analysis of boundary value problems (see section 5). The main difficulty in the proof lies in the fact that the operator $A$ is not monotone.
   Having S+ property for the operator $A$ with $p(\cdot)$-growth we will be able to get the strong convergence in the Galerkin method and strenghten the result from \cite{Roubicek}. 
   
   Structure of the work is following. In section 2 we introduce notation and some theory on $L^{p(\cdot)}$ and $W^{1, p(\cdot)}$ spaces. In section 3 we prove some auxillary lemmas. In section 4 we state and prove the main result. In section 5 we present examples of operators which satisfy S+ property. In section 6 we give an application of S+. In section 7 (appendix) we collect some definitions from multivalued analysis. 
\section{Notation and preliminaries}
Throughout this paper we introduce the following notation. By
$\mathbb{N}_{0} = \{0, 1, 2, ...\}$ we denote the set of all natural numbers,
$\mathbb{N} = \{1, 2,...\}$ the set of all natural numbers without zero and
$N \in \mathbb{N}$.
For a normed space $X$ we denote by $X^{*}$ its topological dual.
By $\langle \cdot, \cdot \rangle$ we denote the duality brackets for the pair $(X^{*}, X)$.
By $(\cdot, \cdot)_{\mathbb{R}^{N}}$ we denote the standard scalar product in $\mathbb{R}^{N}$. By $|\cdot|$ we denote module in $\mathbb{R}$ or norm in $\mathbb{R}^{N}$. By $\|\cdot\|_{p(\cdot)}$ we denote norm in $L^{p(\cdot)}(\Omega)$ or in $L^{p(\cdot)}(\Omega; \mathbb{R}^{N})$.
By $\Omega$ we denote an open set in $\mathbb{R}^{N}$.
If we consider set $\Omega$ as a measure space we always consider it as a triple $(\Omega, \Sigma, \mu)$ where $\Sigma$ is the $\sigma$-algebra of Lebesgue measurable sets and $\mu$ is the Lebesgue measure.

Now we remind some preliminary facts and definitions from theory of $L^{p(\cdot)}$ and $W^{1, p(\cdot)}$ spaces.
\begin{definition}\label{N p def 1}
	  We define $P(\Omega)$ to be the set of all $\mu$-measurable functions $p: \Omega \to [1, +\infty]$. We define $L^{0}(\Omega)$ to be a real vector space of all $\mu$-measurable real valued functions. For function $p \in P(\Omega)$ we define $p_{-} = \essinf_{z \in \Omega}p(z)$ and $p^{+} = \esssup_{z \in \Omega}p(z)$. We say that functions $p, q \in P(\Omega)$ are conjugate if
	$$\frac{1}{p(z)} + \frac{1}{q(z)} = 1$$
	for almost all $z \in \Omega.$
\end{definition}

\begin{definition}\label{N p def 2}[variable exponent Lebesgue space](see Definition 3.2.1 in \cite{Di Ha Ha Ru})
	Let us fix a function $p \in P(\Omega)$, then we can consider the semimodular
	$$\rho_{p(\cdot)}(u) = \int_{\Omega}|u(z)|^{p(z)}dz \quad \textrm{for all} \quad u \in L^{0}(\Omega)$$
	on the real vector space $L^{0}(\Omega)$. We denote $\rho_{L^{p(\cdot)}(\Omega)} = \rho_{p(\cdot)}$. We define variable exponent Lebesgue space
	$$L^{p(\cdot)}(\Omega) = \{u \in L^{0}(\Omega):\quad \lim_{\lambda \to 0} \rho_{L^{p(\cdot)}(\Omega)}(\lambda u) = 0\}$$
	or equivalently
	$$L^{p(\cdot)}(\Omega) = \{u \in L^{0}(\Omega):\quad \rho_{L^{p(\cdot)}(\Omega)}(\lambda u) < +\infty \quad \textrm{for some}\quad \lambda > 0\}$$
	Then we can equip the space $L^{p(\cdot)}(\Omega)$ with the Luxemburg norm 
	$$\|u\|_{L^{p(\cdot)}(\Omega)} = \inf\left\{\lambda > 0: \rho_{L^{p(\cdot)}(\Omega)}\left(\frac{u}{\lambda}\right) \le 1\right\}.$$
	In the similar way we define also the normed space $L^{p(\cdot)}(\Omega; \mathbb{R}^{N})$.
\end{definition}

\begin{theorem}\label{N p theorem 1}[H\"{o}lder's inequality](see in \cite{Di Ha Ha Ru} Lemma 3.2.20)
	Let functions $p, q \in P(\Omega)$ be conjugate, then for all functions $u \in L^{p(\cdot)}(\Omega)$, $v \in L^{q(\cdot)}(\Omega)$ we have
	$$\int_{\Omega}|u||v|dz \le 2\|u\|_{p(\cdot)}\|v\|_{q(\cdot)}.$$
\end{theorem}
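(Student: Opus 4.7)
The plan is to reduce the variable-exponent inequality to the classical pointwise Young inequality after normalizing $u$ and $v$ by their Luxemburg norms. First I would dispose of the trivial case in which $\|u\|_{p(\cdot)}=0$ or $\|v\|_{q(\cdot)}=0$; otherwise set $\tilde u= u/\|u\|_{p(\cdot)}$ and $\tilde v = v/\|v\|_{q(\cdot)}$. From the very definition of the Luxemburg norm, together with the left-continuity of the semimodular $\rho_{p(\cdot)}$ on $L^{0}(\Omega)$ (a standard fact about semimodulars which I would quote from the book of Diening--Harjulehto--H\"asto--R\.u\v{z}i\v{c}ka), one obtains $\rho_{p(\cdot)}(\tilde u)\le 1$ and $\rho_{q(\cdot)}(\tilde v)\le 1$.

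Next I would split $\Omega$ according to the value of the exponent, $\Omega=\Omega_1\cup\Omega_2\cup\Omega_3$ with $\Omega_1=\{1<p(z)<+\infty\}$, $\Omega_2=\{p(z)=1\}$, $\Omega_3=\{p(z)=+\infty\}$. On $\Omega_1$ the scalar Young inequality with conjugate exponents $p(z),q(z)$ yields pointwise a.e.
$$|\tilde u(z)||\tilde v(z)|\ \le\ \frac{|\tilde u(z)|^{p(z)}}{p(z)}+\frac{|\tilde v(z)|^{q(z)}}{q(z)}\ \le\ |\tilde u(z)|^{p(z)}+|\tilde v(z)|^{q(z)},$$
where in the last step I use $p(z),q(z)\ge 1$. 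On $\Omega_2$ (where $q(z)=+\infty$) and on $\Omega_3$ (where $q(z)=1$) I would estimate the product trivially by the $L^{\infty}$-factor times the other one, noting that $\|\tilde v\|_{L^{\infty}(\Omega_2)}\le 1$ and $\|\tilde u\|_{L^{\infty}(\Omega_3)}\le 1$ by the normalization.

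Integrating the above estimates over $\Omega$ gives
$$\int_{\Omega}|\tilde u(z)||\tilde v(z)|\,dz\ \le\ \rho_{p(\cdot)}(\tilde u)+\rho_{q(\cdot)}(\tilde v)\ \le\ 1+1=2.$$
Multiplying through by $\|u\|_{p(\cdot)}\|v\|_{q(\cdot)}$ yields the claim with the stated constant $2$.

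The main obstacle I anticipate is not the computation itself, which mirrors the classical constant-exponent proof, but rather the justification that $\rho_{p(\cdot)}(\tilde u)\le 1$ for the specific choice $\tilde u=u/\|u\|_{p(\cdot)}$. For constant exponents this is immediate by homogeneity; for variable exponents one really needs the left-continuity/convexity of the semimodular and a separate argument on the set $\{p=+\infty\}$. A secondary minor obstacle is the handling of the degenerate pieces $\Omega_2,\Omega_3$, where Young's inequality collapses and must be replaced by the $L^{\infty}$-bound described above; these are precisely the steps one can sidestep under the common blanket assumption $1<p_{-}\le p^{+}<+\infty$, but are needed for the full generality of $p\in P(\Omega)$ stated here.
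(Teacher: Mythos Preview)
Your argument is correct and is precisely the standard proof of the variable-exponent H\"older inequality: normalize by the Luxemburg norms, use the unit-ball property $\rho_{p(\cdot)}(\tilde u)\le 1$ (which, as you rightly flag, relies on left-continuity of the semimodular), apply Young's inequality pointwise on $\{1<p(z)<\infty\}$, and handle the degenerate sets $\{p=1\}$ and $\{p=\infty\}$ via the $L^{\infty}$ bound of the normalized conjugate function. Note that the paper itself does not supply a proof of this theorem at all; it is merely quoted from Diening--Harjulehto--H\"ast\"o--R\r{u}\v{z}i\v{c}ka (Lemma~3.2.20), and the proof given there is essentially the one you outline.

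One small remark: your treatment of $\Omega_2$ and $\Omega_3$ implicitly uses the convention from that reference for the modular on the set where the exponent equals $+\infty$ (so that $\rho_{q(\cdot)}(\tilde v)\le 1$ forces $|\tilde v|\le 1$ a.e.\ on $\{q=\infty\}$). The paper's own Definition~\ref{N p def 2} writes only $\rho_{p(\cdot)}(u)=\int_\Omega |u|^{p(z)}\,dz$ without spelling out this convention, so strictly speaking you should either invoke the full modular definition from \cite{Di Ha Ha Ru} or, since everywhere else in the paper one works under $1<p_{-}\le p^{+}<+\infty$, simply note that under that standing hypothesis $\Omega_2=\Omega_3=\emptyset$ and the issue disappears.
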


\begin{definition}\label{N p def 3}(see in \cite{Di Ha Ha Ru} Definition 2.5.1)
	For function $p \in P(\Omega)$ let us define the set
	$$E^{p(\cdot)}(\Omega) = \{u \in L^{p(\cdot)}(\Omega): \rho_{p(\cdot)}(\lambda u) < \infty \quad \textrm{for all} \quad \lambda > 0\}.$$
\end{definition}
\begin{theorem}\label{N p theorem 2}(see in \cite{Di Ha Ha Ru} Theorem 3.4.1)
	Let function $p \in P(\Omega)$, then the following conditions are equivalent:
	\item[(a)] $E^{p(\cdot)}(\Omega) = L^{p(\cdot)}(\Omega)$
	\item[(b)] $p^{+} < +\infty$.
\end{theorem}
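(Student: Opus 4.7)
The plan is to prove this biconditional by handling each direction separately, with direction (b)$\Rightarrow$(a) being essentially a computation and direction (a)$\Rightarrow$(b) requiring a construction.

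For (b)$\Rightarrow$(a), I would start from an arbitrary $u \in L^{p(\cdot)}(\Omega)$. By the second characterization in Definition \ref{N p def 2}, there exists $\lambda_0 > 0$ such that $\rho_{p(\cdot)}(\lambda_0 u) < +\infty$. Given any $\lambda > 0$, the pointwise identity $|\lambda u(z)|^{p(z)} = (\lambda/\lambda_0)^{p(z)}|\lambda_0 u(z)|^{p(z)}$ together with the bound $(\lambda/\lambda_0)^{p(z)} \le \max\bigl(1,(\lambda/\lambda_0)^{p^+}\bigr)$ (which is where the assumption $p^+ < +\infty$ is essential, since it caps the ratio uniformly in $z$) yields
$$\rho_{p(\cdot)}(\lambda u) \le \max\bigl(1,(\lambda/\lambda_0)^{p^+}\bigr)\,\rho_{p(\cdot)}(\lambda_0 u) < +\infty.$$
Since $\lambda > 0$ was arbitrary, $u \in E^{p(\cdot)}(\Omega)$, giving the inclusion $L^{p(\cdot)}(\Omega) \subseteq E^{p(\cdot)}(\Omega)$; the reverse inclusion is trivial from the definitions.

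For the converse (a)$\Rightarrow$(b), I would argue by contrapositive: assume $p^+ = +\infty$ and exhibit a function in $L^{p(\cdot)}(\Omega) \setminus E^{p(\cdot)}(\Omega)$. First, treat the case where $\mu(\{p = +\infty\}) > 0$ separately, using the convention $t^{\infty} = +\infty$ for $t > 1$: a function equal to $1$ on a subset of $\{p=+\infty\}$ with positive finite measure and zero elsewhere lies in $L^{p(\cdot)}(\Omega)$ (since $\rho_{p(\cdot)}(\lambda u) = 0$ for $\lambda \le 1$) but fails to be in $E^{p(\cdot)}(\Omega)$ (since $\rho_{p(\cdot)}(2u) = +\infty$). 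In the remaining case where $p$ is a.e. finite but unbounded, choose an increasing sequence $n_k \to +\infty$ with $\mu(\{p > n_k\}) > 0$ for every $k$, and extract pairwise disjoint measurable subsets $A_k \subseteq \{p > n_k\}$ with $0 < \mu(A_k) < +\infty$. Define
$$u(z) = \sum_{k=1}^{\infty} c_k \,\chi_{A_k}(z),$$
with constants $c_k \in (1,2)$ chosen so that $\sum_k c_k^{n_k}\mu(A_k) < +\infty$ while $\sum_k (2c_k)^{n_k}\mu(A_k) = +\infty$; choosing $c_k$ close to $1$ from above and suitably scaling $\mu(A_k)$ makes both demands compatible, for example $c_k = 1 + 1/k$ with $\mu(A_k)$ shrinking fast enough to ensure convergence of the first series while $(2c_k)^{n_k} \ge 2^{n_k}$ grows arbitrarily fast.

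The main obstacle is the construction in this last case: one must simultaneously guarantee $u \in L^{p(\cdot)}(\Omega)$ (equivalently, $\rho_{p(\cdot)}(\lambda u) < +\infty$ for some $\lambda > 0$) and $u \notin E^{p(\cdot)}(\Omega)$ (some other dilate blows up). The flexibility in extracting the disjoint sets $A_k$ with prescribed small measure, combined with the free choice of amplitudes $c_k$ strictly between $1$ and $2$, is what makes balancing these two requirements possible; the key point is that on $A_k$ the exponent $p(z)$ exceeds $n_k$, so the ratio $(2c_k/c_k)^{p(z)} = 2^{p(z)}$ grows without control, while $c_k^{p(z)}$ can be kept summable.
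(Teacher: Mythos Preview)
The paper does not prove this statement; it is quoted as a preliminary fact from Diening--Harjulehto--H\"ast\"o--R\r{u}\v{z}i\v{c}ka (Theorem~3.4.1 there), so there is no in-paper argument to compare against. Your direction (b)$\Rightarrow$(a) is clean and correct.

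For the contrapositive there is one imprecision worth flagging. You require $\sum_k c_k^{\,n_k}\mu(A_k) < +\infty$ in order to place $u$ in $L^{p(\cdot)}(\Omega)$, but on $A_k$ the exponent satisfies $p(z) > n_k$ while $c_k > 1$, so $c_k^{\,p(z)} \ge c_k^{\,n_k}$ and your sum is only a \emph{lower} bound for $\rho_{p(\cdot)}(u)$, not an upper one. The repair is easy and already implicit in your set-up: since every $c_k < 2$, for $\lambda \le 1/2$ one has $\lambda c_k < 1$ and hence $(\lambda c_k)^{p(z)} \le 1$ on each $A_k$, giving $\rho_{p(\cdot)}(\lambda u) \le \sum_k \mu(A_k)$. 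Thus it suffices to arrange $\sum_k \mu(A_k) < +\infty$ (no condition on the $c_k$ is needed here), which is compatible with $\sum_k 2^{n_k}\mu(A_k) = +\infty$ by taking, say, $\mu(A_k) = 2^{-n_k}$. The divergence of $\rho_{p(\cdot)}(2u)$ then follows exactly as you say from $(2c_k)^{p(z)} \ge 2^{p(z)} \ge 2^{n_k}$ on $A_k$. With this small correction the construction goes through; in fact the amplitudes $c_k$ can be taken identically $1$, which simplifies the argument.
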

\begin{theorem}\label{N p theorem 3}[Lebesgue dominated convergence theorem](see in \cite{Di Ha Ha Ru} Lemma 3.2.8)
	Let function $p \in P(\Omega)$, sequence $\{u_{n}\}_{n \in \mathbb{N}} \subset L^{0}(\Omega)$ and functions $u, v \in L^{0}(\Omega)$. If 
	$$u_{n}(z) \to u(z) \quad \textrm{for almost all}\quad z \in \Omega$$ 
	$$|u_{n}(z)| \le |v(z)|\quad \textrm{for almost all}\quad z \in \Omega\quad \textrm{and}\quad \forall n \in \mathbb{N}$$
	and
	$$v \in E^{p(\cdot)}(\Omega)$$
	then
	$$u_{n} \to u \quad \textrm{strongly in}\quad L^{p(\cdot)}(\Omega).$$ 
\end{theorem}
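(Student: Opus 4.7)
The plan is to reduce norm convergence in $L^{p(\cdot)}(\Omega)$ to modular convergence, and then invoke the classical Lebesgue dominated convergence theorem in $L^{1}(\Omega)$ applied to the integrand $|\lambda(u_n-u)|^{p(\cdot)}$.

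First I would verify that the limit $u$ belongs to $L^{p(\cdot)}(\Omega)$. Since $u_{n}(z)\to u(z)$ and $|u_{n}(z)|\le|v(z)|$ a.e., passing to the limit gives $|u(z)|\le|v(z)|$ a.e., and similarly $|u_n(z)-u(z)|\le 2|v(z)|$ a.e. Because $v\in E^{p(\cdot)}(\Omega)$, Definition \ref{N p def 3} gives $\rho_{p(\cdot)}(\lambda v)<+\infty$ for every $\lambda>0$, so $2\lambda v\in L^{p(\cdot)}(\Omega)$ for every $\lambda>0$, and in particular $u_n,u\in L^{p(\cdot)}(\Omega)$.

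Next I would invoke the standard characterization of Luxemburg norm convergence by the semimodular: $\|w_k\|_{L^{p(\cdot)}(\Omega)}\to 0$ is equivalent to $\rho_{p(\cdot)}(\lambda w_k)\to 0$ for every $\lambda>0$. Thus it suffices to fix an arbitrary $\lambda>0$ and prove
\begin{equation*}
\rho_{p(\cdot)}\bigl(\lambda(u_n-u)\bigr)=\int_{\Omega}\bigl|\lambda(u_n(z)-u(z))\bigr|^{p(z)}\,dz\ \lra\ 0.
\end{equation*}
Here the integrand converges to $0$ a.e.\ on $\Omega$, and is pointwise dominated by $(2\lambda|v(z)|)^{p(z)}$, which is integrable since $\int_{\Omega}(2\lambda|v|)^{p(z)}dz=\rho_{p(\cdot)}(2\lambda v)<+\infty$. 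The classical Lebesgue dominated convergence theorem in $L^{1}(\Omega,dz)$ therefore yields the claim, and since $\lambda>0$ was arbitrary, the Luxemburg norm convergence follows.

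The only delicate point is the switch between norm and modular convergence. In a variable exponent space, norm convergence is equivalent to modular convergence at every $\lambda>0$ (not just at one $\lambda$), and this is precisely where the hypothesis $v\in E^{p(\cdot)}(\Omega)$ rather than merely $v\in L^{p(\cdot)}(\Omega)$ is used: it is exactly the condition that guarantees that $2\lambda v$ has finite modular for every $\lambda$, producing an admissible dominating function for each $\lambda$. Without it, one would only control one scale and would not reach norm convergence; this is the essential obstacle that the assumption removes.
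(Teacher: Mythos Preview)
The paper does not supply its own proof of this theorem; it merely cites Lemma~3.2.8 of \cite{Di Ha Ha Ru} as the source. Your argument is correct and is precisely the standard proof one finds in that reference: reduce Luxemburg norm convergence $\|u_n-u\|_{p(\cdot)}\to 0$ to modular convergence $\rho_{p(\cdot)}(\lambda(u_n-u))\to 0$ for every $\lambda>0$, and obtain the latter from the classical dominated convergence theorem in $L^1(\Omega)$ with majorant $(2\lambda|v|)^{p(\cdot)}$, whose integrability for all $\lambda$ is exactly the hypothesis $v\in E^{p(\cdot)}(\Omega)$. Your closing remark explaining why $v\in E^{p(\cdot)}(\Omega)$ (and not merely $v\in L^{p(\cdot)}(\Omega)$) is the right assumption is also on point.
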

\begin{theorem}\label{N p theorem 4}
	Let function $p \in P(\Omega)$ with $p^{+} < +\infty$. Let us fix a sequence $\{u_{n}\}_{n \in \mathbb{N}} \subset L^{p(\cdot)}(\Omega)$ and a function $u \in L^{p(\cdot)}(\Omega)$. If 
	$$u_{n} \to u \quad \textrm{strongly in}\quad  L^{p(\cdot)}(\Omega)$$
	then we can take a subsequence $\{u_{n_{k}}\}_{k \in \mathbb{N}}$ and find a function $v \in L^{p(\cdot)}(\Omega)$ such that
	$$u_{n_{k}}(z) \to u(z) \quad \textrm{for almost all}\quad z \in \Omega$$
	and
	$$|u_{n_{k}}(z)| \le |v(z)| \quad \textrm{for almost all}\quad z \in \Omega \quad \textrm{and} \quad \forall k \in \mathbb{N}.$$ 
\end{theorem}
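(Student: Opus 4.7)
The plan is to adapt the classical Riesz extraction argument from the constant exponent theory, with the Luxemburg norm playing the role of the usual $L^p$ norm and with the monotone convergence theorem controlling the semimodular $\rho_{p(\cdot)}$. The assumption $p^{+} < +\infty$ enters through the unit ball property of the Luxemburg norm and through the a.e.\ finiteness of $p$.

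First, using $u_n \to u$ strongly in $L^{p(\cdot)}(\Omega)$, I would extract a subsequence $\{u_{n_k}\}_{k \in \mathbb{N}}$ satisfying
$$\|u_{n_k} - u\|_{p(\cdot)} \le 2^{-k} \quad \forall k \in \mathbb{N}.$$
Then I would define the partial sums and their pointwise nondecreasing limit by
$$v_N(z) = |u(z)| + \sum_{k=1}^{N}|u_{n_k}(z) - u(z)|, \qquad v(z) = \lim_{N \to +\infty} v_N(z) \in [0, +\infty].$$
The triangle inequality for the Luxemburg norm yields the uniform bound
$$\|v_N\|_{p(\cdot)} \le \|u\|_{p(\cdot)} + \sum_{k=1}^{N} 2^{-k} \le \|u\|_{p(\cdot)} + 1 =: C.$$

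The key step is to show that $v \in L^{p(\cdot)}(\Omega)$. Since each $v_N / C$ lies in the closed unit ball of $L^{p(\cdot)}(\Omega)$, the unit ball property (valid because $p^{+} < +\infty$) gives $\rho_{p(\cdot)}(v_N / C) \le 1$ for every $N \in \mathbb{N}$. The integrands $(v_N(z) / C)^{p(z)}$ form a nonnegative, pointwise nondecreasing sequence whose limit equals $(v(z) / C)^{p(z)}$ for almost every $z \in \Omega$, hence the Lebesgue monotone convergence theorem yields
$$\rho_{p(\cdot)}(v / C) = \lim_{N \to +\infty} \rho_{p(\cdot)}(v_N / C) \le 1,$$
so $\|v\|_{p(\cdot)} \le C < +\infty$. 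In particular $v(z) < +\infty$ for almost all $z \in \Omega$.

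From the a.e.\ finiteness of $v$, the series $\sum_{k=1}^{+\infty}|u_{n_k}(z) - u(z)|$ converges pointwise a.e., which forces $u_{n_k}(z) \to u(z)$ for almost every $z \in \Omega$. The bound $|u_{n_k}(z)| \le |u(z)| + |u_{n_k}(z) - u(z)| \le v(z)$ then holds for all $k \in \mathbb{N}$ and almost all $z \in \Omega$, so $v$ is the desired dominating function. The only delicate point where variable exponents differ from the classical constant-exponent case is the passage from the uniform norm bound on $v_N$ to integrability of $v$, and this is handled by combining the unit ball property with monotone convergence; I expect this to be the main (though not severe) obstacle.
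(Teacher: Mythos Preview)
Your argument is correct and is precisely the classical Riesz extraction adapted to the Luxemburg norm; this is exactly the proof the paper has in mind, since it simply cites \cite{Kufner}, Theorem~2.8.1, where the constant-exponent version of the same construction appears. The only variable-exponent specific step---passing from the uniform norm bound $\|v_N\|_{p(\cdot)}\le C$ to $\rho_{p(\cdot)}(v/C)\le 1$ via the unit ball property and monotone convergence---is handled correctly and indeed uses $p^{+}<+\infty$.
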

\begin{proof}
	The proof is analogous as in \cite{Kufner} Theorem 2.8.1.
\end{proof}
\begin{definition}[uniformly integrable sequence of functions]
	Let us fix a sequence of functions $\{u\}_{n \in \mathbb{N}} \subset L^{p(\cdot)}(\Omega)$. We say that this sequence is $p(\cdot)$-uniformly integrable if
		$$\forall \epsilon > 0 \quad \exists \delta(\epsilon) > 0 \quad \forall E \in \Sigma \quad \mu(E) < \delta(\epsilon) \quad \forall n \in \mathbb{N} \quad \int_{E}|u_{n}(z)|^{p(z)}dz < \epsilon.$$
\end{definition}
\begin{theorem}\label{N p theorem 5}[Vitali convergence theorem]
	Let function $p \in P(\Omega)$ with $p^{+} < +\infty$ and $\mu(\Omega) < +\infty$. Let us take a sequence $\{u_{n}\}_{n \in \mathbb{N}} \subset L^{p(\cdot)}(\Omega)$ and a function $u \in L^{p(\cdot)}(\Omega)$. If the family of functions $\{u_{n}\}_{n \in \mathbb{N}}$ is $p(\cdot)$-uniformly integrable and
	$$u_{n}(z) \to u(z) \quad \textrm{for almost all} \quad z \in \Omega$$
 then
	$$u_{n} \to u \quad \textrm{in}\quad L^{p(\cdot)}(\Omega).$$
\end{theorem}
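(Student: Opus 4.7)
The plan is to reduce the statement to the classical Vitali convergence theorem in $L^{1}(\Omega)$ applied to the scalar sequence $\{|u_{n}-u|^{p(\cdot)}\}$. First, since $p^{+}<+\infty$, a standard fact from variable exponent theory (see \cite{Di Ha Ha Ru}, Chapter 3) asserts that $u_{n}\to u$ in $L^{p(\cdot)}(\Omega)$ is equivalent to the modular convergence $\rho_{p(\cdot)}(u_{n}-u)=\int_{\Omega}|u_{n}(z)-u(z)|^{p(z)}\,dz\to 0$. Hence it suffices to prove this modular convergence.

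The pointwise almost everywhere convergence $|u_{n}(z)-u(z)|^{p(z)}\to 0$ is immediate from the hypothesis $u_{n}(z)\to u(z)$ and continuity of the power map. The key step, which I expect to be the main technical point, is to verify that $\{|u_{n}-u|^{p(\cdot)}\}_{n\in\mathbb{N}}$ is uniformly integrable in the classical $L^{1}$ sense. For this, I would use the convexity inequality $(a+b)^{q}\le 2^{q-1}(a^{q}+b^{q})$ valid for $q\ge 1$ to write
$$|u_{n}(z)-u(z)|^{p(z)}\le 2^{p^{+}-1}\bigl(|u_{n}(z)|^{p(z)}+|u(z)|^{p(z)}\bigr)\quad\textrm{for a.e.}\ z\in\Omega.$$
The hypothesis of $p(\cdot)$-uniform integrability provides, for every $\varepsilon>0$, a $\delta>0$ such that $\int_{E}|u_{n}(z)|^{p(z)}\,dz<\varepsilon$ whenever $\mu(E)<\delta$, uniformly in $n$. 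On the other hand, $p^{+}<+\infty$ combined with Theorem \ref{N p theorem 2} yields $L^{p(\cdot)}(\Omega)=E^{p(\cdot)}(\Omega)$, so in particular $|u|^{p(\cdot)}\in L^{1}(\Omega)$, and absolute continuity of the Lebesgue integral makes $\{|u|^{p(\cdot)}\}$ a singleton uniformly integrable family. Combining these facts with the above pointwise bound gives the desired $L^{1}$-uniform integrability of $\{|u_{n}-u|^{p(\cdot)}\}$.

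Since $\mu(\Omega)<+\infty$, I can now invoke the classical Vitali convergence theorem in $L^{1}(\Omega)$: a.e.\ convergence to $0$ together with $L^{1}$-uniform integrability yield $\int_{\Omega}|u_{n}(z)-u(z)|^{p(z)}\,dz\to 0$, i.e.\ modular convergence, which by the first paragraph is exactly $u_{n}\to u$ in $L^{p(\cdot)}(\Omega)$. The bulk of the argument is thus a domination step followed by an application of the classical Vitali theorem; the only substantial point is choosing the correct uniform bound for $|u_{n}-u|^{p(\cdot)}$, and this is where the assumption $p^{+}<+\infty$ is essential, both to guarantee $|u|^{p(\cdot)}\in L^{1}(\Omega)$ via Theorem \ref{N p theorem 2} and to turn the exponent-dependent constant $2^{p(z)-1}$ into the uniform constant $2^{p^{+}-1}$.
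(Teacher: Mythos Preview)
Your proposal is correct and follows essentially the same approach as the paper: the paper's proof simply invokes Lemma~\ref{N p lemma 1} to reduce to showing $\int_{\Omega}|u_{n}-u|^{p(\cdot)}\,d\mu\to 0$ and then declares the remainder ``analogous to the proof of the Vitali convergence theorem for $L^{p}$ spaces.'' You have carried out precisely this analogy in detail, supplying the domination step via the convexity inequality and the appeal to Theorem~\ref{N p theorem 2} that the paper leaves implicit.
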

\begin{proof}
	In view of Lemma \ref{N p lemma 1} it is enough to prove that $\int_{\Omega}|u_{n} - u|^{p(\cdot)}d\mu \to 0$. Now the proof is analogous as proof of Vitali converegnce theorem for $L^{p}$ spaces.
\end{proof}
\begin{remark}
	H\"{o}lder's inequality, Lebesgue dominated convergence theorem, Theorem \ref{N p theorem 4} and Vitali convergence theorem also hold for spaces $L^{p(\cdot)}(\Omega;\mathbb{R}^{N})$.
\end{remark}
\begin{lemma}\label{N p lemma 1}(see \cite{Fan Zhang} Proposition 2.3)
	Let us take a function $p \in P(\Omega)$ with $p^{+} < +\infty$, a sequence $\{u_{n}\}_{n \in \mathbb{N}} \subset L^{p(\cdot)}(\Omega)$ and a function $u \in L^{p(\cdot)}(\Omega)$, then we have
	\begin{equation}
	\|u\|_{p(\cdot)} \ge 1 \Rightarrow \|u\|^{p^{-}}_{p(\cdot)} \le \rho_{p(\cdot)}(u) \le \|u\|^{p^{+}}_{p(\cdot)}
	\end{equation}
	\begin{equation}
	\|u\|_{p(\cdot)} < 1 \Rightarrow \|u\|^{p^{+}}_{p(\cdot)} \le \rho_{p(\cdot)}(u) \le \|u\|^{p^{-}}_{p(\cdot)}
	\end{equation}
	\begin{equation}
	\|u_{n} - u\|_{p(\cdot)} \to 0 \Leftrightarrow \rho_{p(\cdot)}(u_{n} - u) \to 0.
	\end{equation}
\end{lemma}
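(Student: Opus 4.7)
The plan is to derive (1) and (2) from a pointwise comparison between $|u(z)|^{p(z)}$ and $\left(|u(z)|/\|u\|_{p(\cdot)}\right)^{p(z)}$, and then to obtain (3) as an easy corollary.

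First I would dispose of the trivial case $u=0$ and assume $u\ne 0$, so that $\lambda:=\|u\|_{p(\cdot)}>0$. The key auxiliary fact I need is the \emph{unit ball property}: under $p^{+}<+\infty$ one has $\rho_{p(\cdot)}(u/\lambda)=1$. For the inequality $\le 1$ I would take a decreasing sequence $\lambda_{n}\downarrow \lambda$ with $\rho_{p(\cdot)}(u/\lambda_{n})\le 1$ (available from the definition of the infimum) and pass to the limit using Fatou's lemma. For the reverse, if $\rho_{p(\cdot)}(u/\lambda)<1$, then since $p^{+}<+\infty$ one can apply Theorem~\ref{N p theorem 3} to the functions $|u(z)|^{p(z)}/\lambda'^{p(z)}$ as $\lambda'\uparrow\lambda$ (dominated, for $\lambda'$ close to $\lambda$, by $2^{p^{+}}|u(z)|^{p(z)}/\lambda^{p(z)}$, which is integrable because $\rho_{p(\cdot)}(u/\lambda)<+\infty$), obtaining some $\lambda'<\lambda$ with $\rho_{p(\cdot)}(u/\lambda')\le 1$, contradicting the definition of $\lambda$.

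Once this is in hand, I would write, pointwise for a.a.\ $z\in\Omega$,
\[
|u(z)|^{p(z)} \;=\; \lambda^{p(z)}\,\bigl|u(z)/\lambda\bigr|^{p(z)}.
\]
If $\lambda\ge 1$ then $\lambda^{p^{-}}\le\lambda^{p(z)}\le\lambda^{p^{+}}$, and if $\lambda<1$ then the inequalities reverse to $\lambda^{p^{+}}\le\lambda^{p(z)}\le\lambda^{p^{-}}$. Integrating and using $\rho_{p(\cdot)}(u/\lambda)=1$ yields (1) and (2) immediately.

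For (3), if $\|u_{n}-u\|_{p(\cdot)}\to 0$, then eventually the norm is $<1$, so by (2) $\rho_{p(\cdot)}(u_{n}-u)\le \|u_{n}-u\|_{p(\cdot)}^{p^{-}}\to 0$. Conversely, if $\rho_{p(\cdot)}(u_{n}-u)\to 0$, then eventually $\rho_{p(\cdot)}(u_{n}-u)<1$, which by the definition of the Luxemburg norm forces $\|u_{n}-u\|_{p(\cdot)}\le 1$, and applying (2) in the form $\|u_{n}-u\|_{p(\cdot)}^{p^{+}}\le \rho_{p(\cdot)}(u_{n}-u)$ gives $\|u_{n}-u\|_{p(\cdot)}\le\rho_{p(\cdot)}(u_{n}-u)^{1/p^{+}}\to 0$. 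The only step requiring real care is the unit ball property; the rest is bookkeeping with monotonicity of $\lambda\mapsto\lambda^{t}$.
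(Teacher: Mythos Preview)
The paper does not give its own proof of this lemma; it simply cites Fan--Zhang, Proposition~2.3. Your argument is the standard one and is essentially correct: establish the unit ball property $\rho_{p(\cdot)}(u/\|u\|_{p(\cdot)})=1$, then sandwich $\lambda^{p(z)}$ between $\lambda^{p^{\pm}}$ and integrate.

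Two small points to tighten. First, your appeal to Theorem~\ref{N p theorem 3} for the continuity of $\lambda'\mapsto\rho_{p(\cdot)}(u/\lambda')$ is slightly off: that theorem concludes convergence in the $L^{p(\cdot)}$ \emph{norm}, whereas what you need is convergence of the modular integrals, i.e.\ $L^{1}$ convergence of $z\mapsto |u(z)/\lambda'|^{p(z)}$. Classical dominated convergence in $L^{1}$ (or Theorem~\ref{N p theorem 3} applied with the constant exponent $1$) is the right tool; your dominating function $2^{p^{+}}|u/\lambda|^{p(\cdot)}$ is correct. Second, in the converse direction of (3) you deduce only $\|u_{n}-u\|_{p(\cdot)}\le 1$ before invoking (2), which requires the strict inequality $<1$; you should note that $\|u_{n}-u\|_{p(\cdot)}=1$ would force $\rho_{p(\cdot)}(u_{n}-u)\ge 1$ by (1), contradicting $\rho_{p(\cdot)}(u_{n}-u)<1$. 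With these two clarifications your proof is complete.
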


\begin{definition}\label{N p theorem 6}[variable exponent Sobolev space](see \cite{Di Ha Ha Ru} Definition 8.2.1)
	Let $k \in \mathbb{N}_{0}$ and $p \in P(\Omega)$. The function $u \in L^{p(\cdot)}(\Omega)$ belongs to the space $W^{k, p(\cdot)}(\Omega)$, if its weak derivatives $\partial_{\alpha}u$ with $|\alpha| \le k$ exist and belong to $L^{p(\cdot)}(\Omega)$. We define a semimodular on $W^{k, p(\cdot)}(\Omega)$ by
	$$\rho_{W^{k, p(\cdot)}(\Omega)}(u) = \sum_{0 \le |\alpha| \le k} \rho_{L^{p(\cdot)}(\Omega)}(\partial_{\alpha}u)$$
	which induces a norm by
	$$\|u\|_{W^{k, p(\cdot)}(\Omega)} = \inf\left\{\lambda > 0: \rho_{W^{k, p(\cdot)}(\Omega)}\left(\frac{u}{\lambda}\right) \le 1\right\}.$$
\end{definition}
\begin{definition}(see in \cite{Fan Zhang} Proposition 2.5)
	For a function $p \in C(\overline{\Omega})$ we define the space
	$$W^{k, p(\cdot)}_{0}(\Omega) = \overline{C^{\infty}_{0}(\Omega)}$$ and we take the closure in $W^{k, p(\cdot)}(\Omega)$. Moreover when set $\Omega$ is bounded we have an equivalent norm on the space $W^{k, p(\cdot)}_{0}(\Omega)$ given by $\|u\| = |\nabla u|_{p(\cdot)}$.
\end{definition}
As a simple consequence of the definition of norm in $W^{1, p(\cdot)}$ we get
\begin{remark}\label{N p remark 1}
	Let  $p \in P(\Omega)$. For a sequence $\{u_{n}\}_{n \in \mathbb{N}} \subset W^{1, p(\cdot)}(\Omega)$ and a function $u \in W^{1, p(\cdot)}(\Omega)$ we have
	$$u_{n} \to u \quad \textrm{strongly in} \quad  W^{1, p(\cdot)}(\Omega)$$
	if and only if
	$$u_{n} \to u \quad \textrm{strongly in} \quad  L^{p(\cdot)}(\Omega)$$
	and
	$$\nabla u_{n} \to \nabla u \quad \textrm{strongly in} \quad  L^{p(\cdot)}(\Omega; \mathbb{R}^{N}).$$
\end{remark}

\begin{definition}\label{N p def 5}
	If function $p$ belongs to $ P(\Omega)$, then we can define the Sobolev conjugate exponent point-wise, i.e
	\begin{equation} p^{*}(z) =\label{eq_01}
	\left\{
	\begin{array}{l}
	\frac{Np(z)}{N - p(z)}
	\quad\textrm{if} \quad N > p(z) \quad  z \in \Omega\\
	\displaystyle
	+\infty
	\quad\textrm{if}\quad N \le p(z) \quad z \in \Omega.
	\end{array}
	\right.
	\end{equation}	
\end{definition}
\begin{remark}\label{N p remark 3}
	The Sobolev conjugate exponent is a generalization of the critical Sobolev exponent.
\end{remark}
\begin{remark}\label{N p remark 5}
	$p^{*}(z) \ge \frac{p^{*}(z)}{q(z)} \ge p(z)$ for all $z \in \Omega$.
\end{remark}
\begin{theorem}\label{N p theorem 7}(see \cite{Fan Zhang} Propsition 2.5)
	Let $\Omega \subset \mathbb{R}^{N}$ be bounded set with Lipschitz continuous boundary $\pO$ and functions $p, s \in C(\overline{\Omega})$ with $p(z) \ge 1$ for all $z \in \overline{\Omega}$. Let $1 \le s(z) < p^{*}(z)$ for all $z \in \overline{\Omega}$. Then the embedding $W^{1, p(\cdot)}(\Omega) \to L^{s(\cdot)}(\Omega)$ is compact. 
\end{theorem}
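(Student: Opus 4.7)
The plan is to reduce the compact embedding in the variable exponent scale to the classical Rellich-Kondrachov theorem at the constant exponent $p^{-}$, followed by a uniform integrability argument based on the continuous Sobolev embedding into $L^{p^{*}(\cdot)}(\Omega)$ and an application of Vitali's theorem (Theorem \ref{N p theorem 5}).

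First, it is enough to show that every bounded sequence $\{u_n\}_{n\in\mathbb{N}}\subset W^{1, p(\cdot)}(\Omega)$ admits a subsequence converging in $L^{s(\cdot)}(\Omega)$. Since $\mu(\Omega)<+\infty$ and $p^{-}\ge 1$, the continuous inclusion $W^{1, p(\cdot)}(\Omega)\hookrightarrow W^{1, p^{-}}(\Omega)$ bounds $\{u_n\}$ in $W^{1, p^{-}}(\Omega)$. Classical Rellich-Kondrachov then produces a subsequence $\{u_{n_k}\}$ and a function $u$ with $u_{n_k}\to u$ in $L^{p^{-}}(\Omega)$ and almost everywhere on $\Omega$; Fatou's lemma applied to the modular shows $u\in L^{p(\cdot)}(\Omega)$.

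Next, I would establish $p(\cdot)$-uniform integrability of $\{|u_{n_k}|^{s(\cdot)}\}$ in the sense of the definition preceding Theorem \ref{N p theorem 5} (with exponent $s(\cdot)$ in place of $p(\cdot)$). Since $s,p^{*}\in C(\overline{\Omega})$ and $s(z)<p^{*}(z)$ on the compact set $\overline{\Omega}$, a compactness argument produces $\theta>1$ with $\theta\,s(z)\le p^{*}(z)$ everywhere. The continuous variable-exponent Sobolev embedding $W^{1, p(\cdot)}(\Omega)\hookrightarrow L^{p^{*}(\cdot)}(\Omega)$, combined with $\theta s\le p^{*}$ and Lemma \ref{N p lemma 1}, gives that $\{|u_{n_k}|^{s(\cdot)}\}$ is bounded in $L^{\theta}(\Omega)$. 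For any measurable $E\subset\Omega$, H\"older's inequality yields
$$\int_E |u_{n_k}(z)|^{s(z)}\,dz \le C\,\mu(E)^{1/\theta'}$$
uniformly in $k$, which is the desired uniform integrability. The convexity estimate $|a-b|^{s(z)}\le 2^{s^{+}}(|a|^{s(z)}+|b|^{s(z)})$ propagates this property to $\{u_{n_k}-u\}$, and Theorem \ref{N p theorem 5} then yields $u_{n_k}\to u$ strongly in $L^{s(\cdot)}(\Omega)$.

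The main obstacle is the continuous Sobolev embedding $W^{1, p(\cdot)}(\Omega)\hookrightarrow L^{p^{*}(\cdot)}(\Omega)$, which is the genuinely nontrivial step and in full generality requires log-H\"older regularity of $p(\cdot)$; under the weaker assumption $p\in C(\overline{\Omega})$ used here, the embedding survives provided one stays strictly below the critical exponent, matching the strict inequality $s(z)<p^{*}(z)$ in the hypothesis. A secondary concern is the region $\{z:p(z)\ge N\}$ where $p^{*}=+\infty$; there a Morrey-type $L^{\infty}$ bound replaces the Sobolev embedding, after which the uniform integrability argument proceeds verbatim.
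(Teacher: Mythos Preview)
The paper does not supply its own proof of this statement; it is quoted from Fan--Zhang as a known fact, so there is nothing to compare your argument against on the paper's side.

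Your outline is essentially the standard proof found in that reference and is correct in spirit. Two comments. First, the continuous embedding $W^{1,p(\cdot)}(\Omega)\hookrightarrow L^{r(\cdot)}(\Omega)$ for $r(z)<p^{*}(z)$ that you invoke to obtain uniform integrability is itself the substantive ingredient (it is the continuous half of the same Fan--Zhang proposition, proved by covering $\overline{\Omega}$ with finitely many balls on which $p$ oscillates little and applying the constant-exponent Sobolev inequality locally); your sketch treats it as given, and once it is available the Vitali step goes through as you describe. Second, you do not need the critical embedding into $L^{p^{*}(\cdot)}$ at all: since $s<p^{*}$ strictly on the compact set $\overline{\Omega}$, you can choose $\theta>1$ with $\theta\,s(z)<p^{*}(z)$ still strict everywhere, so only the subcritical continuous embedding (which needs merely $p\in C(\overline{\Omega})$, no log-H\"older regularity) is required. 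With that adjustment your ``main obstacle'' paragraph disappears, and the case distinction on $\{p\ge N\}$ is handled the same way, since on that set any finite $\theta s$ is automatically subcritical.
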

As a simple consequence of Theorem \ref{N p theorem 7} and the Urysohn subsequence principle we get.
\begin{remark}\label{N p remark 6}
	Under the same assumptions as in Theorem \ref{N p theorem 7} if a sequence $\{u_{n}\}_{u \in \mathbb{N}} \subset W^{1, p(\cdot)}(\Omega)$ and function $u \in  W^{1, p(\cdot)}(\Omega)$ are such that
	$$u_{n} \rightharpoonup u \quad \textrm{weakly in} \quad W^{1, p(\cdot)}(\Omega)$$ then
	$$u_{n} \to u \quad \textrm{strongly in} \quad L^{s(\cdot)}(\Omega).$$
\end{remark}
Before we formulate and prove our main result we need do some preparation.
\section{Auxillary lemmas}
\begin{definition}
	A function $a: \Omega \times \mathbb{R} \times \mathbb{R}^{N} \to \mathbb{R}^{N}$ is called a Carath\'{e}odory function
	if for all $(s, \xi) \in \mathbb{R} \times \mathbb{R}^{N}$ the function
	$$\Omega \ni z \to a(z, s, \xi) \in \mathbb{R}^{N} \quad\textrm{is measurable}$$
	and for almost all $z \in \Omega$ the function
	
	$$\mathbb{R} \times \mathbb{R}^{N} \ni (s, \xi) \to a(z, s, \xi) \in \mathbb{R}^{N} \quad\textrm{is continuous}.$$
\end{definition}

\begin{lemma}\label{A L lemma 1}
	If a function
	$$a: \Omega \times \mathbb{R} \times \mathbb{R}^{N} \to
	\mathbb{R}^{N} \quad \textrm{is Carath\'{e}odory}$$
	and functions
	$$v: \Omega \to \mathbb{R}$$
	$$w: \Omega \to \mathbb{R}^{N}$$
	are measurable, then the function
	$$\Omega \ni z \to a(z, v(z), w(z)) \in \mathbb{R}^{N}$$
	is measurable.
\end{lemma}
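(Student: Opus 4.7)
The plan is to reduce to the case where $v$ and $w$ are simple functions via approximation, and then pass to the limit using the continuity of $a$ in its last two arguments.

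First I would pick sequences of simple functions $v_n:\Omega\to\mathbb{R}$ and $w_n:\Omega\to\mathbb{R}^N$ such that $v_n(z)\to v(z)$ and $w_n(z)\to w(z)$ for every $z\in\Omega$. This is the standard approximation of measurable functions by simple ones (for the vector-valued $w$, apply it componentwise). Next I would verify directly that for each fixed $n$ the map $z\mapsto a(z,v_n(z),w_n(z))$ is measurable. Writing
\begin{equation*}
v_n=\sum_{i=1}^{k_n} s_i\,\chi_{A_i},\qquad w_n=\sum_{j=1}^{\ell_n}\xi_j\,\chi_{B_j},
\end{equation*}
with $A_i,B_j\in\Sigma$ forming measurable partitions of $\Omega$, one has
\begin{equation*}
a(z,v_n(z),w_n(z))=\sum_{i,j} a(z,s_i,\xi_j)\,\chi_{A_i\cap B_j}(z),
\end{equation*}
which is a finite sum of products of measurable functions (the measurability of $z\mapsto a(z,s_i,\xi_j)$ is exactly the first defining property of a Carathéodory function), hence measurable.

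Then I would pass to the limit. For almost every $z\in\Omega$ the function $(s,\xi)\mapsto a(z,s,\xi)$ is continuous, so
\begin{equation*}
a(z,v_n(z),w_n(z))\longrightarrow a(z,v(z),w(z))\qquad\textrm{for almost all }z\in\Omega.
\end{equation*}
Since the pointwise (a.e.) limit of a sequence of measurable $\mathbb{R}^N$-valued functions is measurable (componentwise, after modification on the null set where continuity fails, which does not affect measurability with respect to the Lebesgue $\sigma$-algebra), the limit $z\mapsto a(z,v(z),w(z))$ is measurable, which concludes the proof.

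The argument is essentially routine; the only place that requires a little care is step two, the explicit identification of $a(\cdot,v_n(\cdot),w_n(\cdot))$ as a finite sum over the product partition $\{A_i\cap B_j\}$, so that the measurability in $z$ for fixed parameters can actually be used. Everything else is a straightforward approximation-plus-continuity argument, and no structural assumption on $a$ beyond the Carathéodory property is needed.
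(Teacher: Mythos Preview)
Your argument is correct and is in fact the standard proof of this classical fact: approximate $v$ and $w$ pointwise by simple functions, use the measurability of $z\mapsto a(z,s,\xi)$ for fixed $(s,\xi)$ on each piece of the resulting product partition, and then pass to the limit using the continuity of $a$ in $(s,\xi)$ for almost every $z$. Your handling of the null exceptional set (where continuity may fail) via completeness of the Lebesgue $\sigma$-algebra is also fine.

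The paper itself states this lemma without proof, treating it as a known auxiliary fact. So there is nothing to compare against; your write-up simply supplies the omitted argument, and does so along the expected lines.
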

For a Carath\'{e}odory function $a: \Omega \times \mathbb{R} \times \mathbb{R}^{N} \to  \mathbb{R}^{N}$ and conjugate functions $p, q \in C(\overline{\Omega})$ with $1 < p(z)$ for all $z \in \overline{\Omega}$ let us consider the following assumptions: 
\begin{description}
	\item[A1]
	There exist constants $c_{0} > 0$ and a function $r_{1} \in  C(\overline{\Omega})$ such that $1 \le r_{1}(z) < \frac{p^{*}(z)}{q(z)}$ and a function $k_{0} \in L^{q(\cdot)}(\Omega)$ such that for almost all $z \in \Omega$ and for all $(s, \xi) \in \mathbb{R} \times \mathbb{R}^{N}$ we have
	
	$$|a(z, s, \xi)| \le |k_{0}(z)| + c_{0}(|s|^{r_{1}(z)} + |\xi|^{p(z) - 1})$$
\end{description}
\begin{description}
	\item[A2] 
	For almost all $z \in \Omega$ for all $s \in \mathbb{R}$ and for all $\xi, \xi^{'} \in \mathbb{R}^{N}$, $\xi \ne \xi^{'}$
	$$\sum_{i = 1}^{N}(a_{i}(z, s, \xi) - a_{i}(z, s, \xi^{'}))(\xi_{i} - \xi_{i}^{'}) > 0$$
\end{description}
\begin{description}
	\item[A3]
	There exist constants $c_{1}, c_{2} > 0$ and a function $r_{2} \in C(\overline{\Omega})$ such that $1 \le r_{2}(z) < p^{*}(z)$ and a function $k_{1} \in L^{1}(\Omega)$ such that for almost all 
	$z \in \Omega$ and for all $(s, \xi) \in \mathbb{R} \times \mathbb{R}^{N}$ we have
	$$\sum_{i = 1}^{N}a_{i}(z, s, \xi)\xi_{i} \ge c_{1}|\xi|^{p(z)} - c_{2}|s|^{r_{2}(z)} - |k_{1}(z)|$$
\end{description}
We will need two inequalities, for every $1 \le p <\infty$ we have
\begin{equation}\label{A l in 1}
\forall a, b \ge 0 \quad (a + b)^{p} \le 2^{p - 1}(a^{p} + b^{p})
\end{equation}
\begin{equation}\label{A l in 2}
\forall a, b, c \ge 0 \quad (a + b + c)^{p} \le 3^{p - 1}(a^{p} + b^{p} + c^{p}).
\end{equation}
They are a simple consequence of convexity of the function $[0,\infty) \ni t \to t^{p} \in \mathbb{R}$.
\begin{lemma}\label{A L lemma 2}
	Let function $p \in P(\Omega)$ with $1 < p_{-} \le p_{+} < +\infty$ and function $q \in P(\Omega)$ is conjugate to $p$. If the function $a: \Omega \times \mathbb{R} \times \mathbb{R}^{N} \to \mathbb{R}^{N}$ is a Carath\'{e}odory function and if it satisfies the condition A1 then the operator 
	$$A: W^{1, p(\cdot)}(\Omega) \to W^{1, p(\cdot)}(\Omega)^{*}$$
	$$\langle A(u), v\rangle = \int_{\Omega}(a(z, u(z), \nabla u(z)), \nabla v(z))_{\mathbb{R}^{N}}dz \quad \forall u, v \in  W^{1, p(\cdot)}(\Omega)$$
	
    is well defined. Moreover if the function $a$ satisfies conditions A1 - A3, then the operator $A$ is bounded, pseudomonotone and coercive.
\end{lemma}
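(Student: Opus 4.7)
First I would verify that for every $u \in W^{1,p(\cdot)}(\Omega)$ the map $z \mapsto a(z,u(z),\nabla u(z))$ belongs to $L^{q(\cdot)}(\Omega;\mathbb{R}^{N})$. By A1 and inequality (\ref{A l in 2}),
\begin{equation*}
|a(z,u,\nabla u)|^{q(z)} \le 3^{q^{+}-1}\bigl(|k_{0}|^{q(z)} + c_{0}^{q^{+}}|u|^{r_{1}(z)q(z)} + c_{0}^{q^{+}}|\nabla u|^{p(z)}\bigr),
\end{equation*}
using $(p(z)-1)q(z) = p(z)$. The three terms on the right are integrable because $k_{0}\in L^{q(\cdot)}(\Omega)$, because the continuous embedding $W^{1,p(\cdot)}(\Omega) \hookrightarrow L^{r_{1}(\cdot)q(\cdot)}(\Omega)$ from Theorem \ref{N p theorem 7} applies (since $r_{1}q < p^{*}$ pointwise), and because $\nabla u\in L^{p(\cdot)}(\Omega;\mathbb{R}^{N})$. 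H\"older's inequality (Theorem \ref{N p theorem 1}) then yields $|\langle A(u),v\rangle| \le 2\|a(\cdot,u,\nabla u)\|_{q(\cdot)}\|\nabla v\|_{p(\cdot)}$, establishing $A(u)\in W^{1,p(\cdot)}(\Omega)^{*}$. Switching between modulars and norms via Lemma \ref{N p lemma 1} shows that $\|a(\cdot,u,\nabla u)\|_{q(\cdot)}$ is controlled by a function of $\|u\|_{W^{1,p(\cdot)}(\Omega)}$, hence $A$ is bounded.

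\textbf{Coercivity.} Evaluating the duality pairing at $v=u$ and integrating A3 yields $\langle A(u),u\rangle \ge c_{1}\rho_{p(\cdot)}(\nabla u) - c_{2}\rho_{r_{2}(\cdot)}(u) - \|k_{1}\|_{1}$. For $\|u\|_{W^{1,p(\cdot)}(\Omega)}$ large, Lemma \ref{N p lemma 1} bounds $\rho_{p(\cdot)}(\nabla u)$ from below by a power of $\|\nabla u\|_{p(\cdot)}$ at least $p_{-}$, while the embedding $W^{1,p(\cdot)}(\Omega) \hookrightarrow L^{r_{2}(\cdot)}(\Omega)$ (valid because $r_{2}<p^{*}$) combined with Lemma \ref{N p lemma 1} bounds $\rho_{r_{2}(\cdot)}(u)$ from above by a power of $\|u\|_{W^{1,p(\cdot)}(\Omega)}$ subcritical relative to the leading $p_{-}$ term; dividing by the norm and letting it tend to $+\infty$ yields coercivity under the natural size comparison of these exponents.

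\textbf{Pseudomonotonicity.} This is the crux of the lemma, and I would proceed by a Leray--Lions type argument adapted to the variable exponent. Take $u_{n}\rightharpoonup u$ in $W^{1,p(\cdot)}(\Omega)$ with $\limsup \langle A(u_{n}),u_{n}-u\rangle \le 0$. Remark \ref{N p remark 6} yields a subsequence with $u_{n}\to u$ strongly in $L^{r_{1}(\cdot)q(\cdot)}(\Omega)$ and, via Theorem \ref{N p theorem 4}, pointwise a.e.\ with an $L^{r_{1}(\cdot)q(\cdot)}$-majorant. Together with A1 and the Vitali convergence Theorem \ref{N p theorem 5}, this gives $a(\cdot,u_{n},\nabla u)\to a(\cdot,u,\nabla u)$ in $L^{q(\cdot)}(\Omega;\mathbb{R}^{N})$, so $\int_{\Omega}a(z,u_{n},\nabla u)\cdot(\nabla u_{n}-\nabla u)\,dz \to 0$ by weak convergence of $\nabla u_{n}$. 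Writing
\begin{equation*}
\langle A(u_{n}),u_{n}-u\rangle = \int_{\Omega}\bigl(a(z,u_{n},\nabla u_{n})-a(z,u_{n},\nabla u)\bigr)\cdot(\nabla u_{n}-\nabla u)\,dz + \int_{\Omega}a(z,u_{n},\nabla u)\cdot(\nabla u_{n}-\nabla u)\,dz,
\end{equation*}
the first integrand is nonnegative by A2, so the hypothesis forces the first integral to tend to zero. A standard Browder--Minty extraction then produces $\nabla u_{n}\to\nabla u$ a.e.\ along a further subsequence, and a second application of Vitali's theorem yields $a(\cdot,u_{n},\nabla u_{n})\to a(\cdot,u,\nabla u)$ in $L^{q(\cdot)}(\Omega;\mathbb{R}^{N})$; hence $\liminf \langle A(u_{n}),u_{n}-v\rangle \ge \langle A(u),u-v\rangle$ for every test $v$.

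\textbf{Main obstacle.} I expect the main technical difficulty to lie in the two applications of the Vitali convergence theorem, each of which requires $p(\cdot)$- or $q(\cdot)$-uniform integrability of terms such as $|\nabla u_{n}|^{p(z)-1}$ or $|u_{n}|^{r_{1}(z)}$. In the constant exponent setting these reduce to $L^{p}$-boundedness plus absolute continuity of the Lebesgue integral, but for variable exponents one must carefully exploit the a priori modular bound on $\{\nabla u_{n}\}$ that follows from weak convergence, and control the oscillation of $p(\cdot)$ and $q(\cdot)$. The passage from the vanishing of the nonnegative monotone integrand to pointwise gradient convergence, via strict monotonicity A2 combined with a diagonal extraction on sets of pointwise control, is the second delicate point and is where inequalities (\ref{A l in 1})--(\ref{A l in 2}) enter to linearize the $p(z)$-power before the modular estimates can close.
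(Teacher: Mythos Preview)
Your outline matches the paper's in spirit: the paper handles well-definedness exactly as you do (A1 plus H\"older to bound the pairing, after observing that $a(\cdot,u,\nabla u)\in L^{q(\cdot)}$), and for boundedness, pseudomonotonicity, and coercivity it gives no argument at all, simply citing Roubi\v{c}ek's Lemmas 2.31, 2.32, 2.35. So you are in fact far more explicit than the paper, and the Leray--Lions decomposition you write down is precisely the engine behind Roubi\v{c}ek's Lemma 2.32.

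There is, however, a real gap in your pseudomonotonicity step. The second Vitali application needs $L^1$-uniform integrability of $|a(\cdot,u_n,\nabla u_n)|^{q(z)}$, hence of $|\nabla u_n|^{p(z)}$. You claim this follows from the ``a priori modular bound on $\{\nabla u_n\}$'' and that in the constant-exponent case it ``reduces to $L^p$-boundedness plus absolute continuity.'' This is false in both settings: $\sup_n\int_\Omega|\nabla u_n|^{p(z)}dz<\infty$ does not imply equi-integrability of the integrands (concentrating bumps give the standard counterexample, and the variable exponent changes nothing). The missing ingredient is A3. Once the nonnegative monotone integral $\int_\Omega\bigl(a(z,u_n,\nabla u_n)-a(z,u_n,\nabla u)\bigr)\cdot(\nabla u_n-\nabla u)\,dz$ has been shown to vanish, combining A3 with the growth bound A1 yields a pointwise upper bound on $c_1|\nabla u_n|^{p(z)}$ by an $L^1$-function plus terms involving only $\nabla u$ and the majorants of $u_n$; this is exactly how the paper later extracts equi-integrability in the proof of Theorem~\ref{Main theorem} (see~(\ref{M r eq 6}) and~(\ref{M R in})). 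Alternatively, pseudomonotonicity does not actually require strong convergence of $a(\cdot,u_n,\nabla u_n)$: the sequence is bounded in the reflexive space $L^{q(\cdot)}$ and converges a.e., hence weakly, and weak convergence together with $\langle A(u_n),u_n-u\rangle\to 0$ already suffices. Your ``Main obstacle'' paragraph locates the difficulty correctly but misidentifies its resolution.
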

\begin{proof}
Using Lemma \ref{A L lemma 1} we easily see that for every $u, v \in W^{1, p(\cdot)}(\Omega)$ the function
$$\Omega \ni z \to (a(z, u(z), \nabla u(z)), \nabla v(z))_{\mathbb{R}^{N}} \in \mathbb{R}$$ is measurable.
Now let us notice that for every $u, v \in W^{1,p(\cdot)}(\Omega)$ the integral
$$ \int_{\Omega}(a(z, u(z), \nabla u(z)), \nabla v(z))_{\mathbb{R}^{N}}dz$$
is finite. Indeed using Schwarz inequality, condition A1 and H\"{o}lder's inequality we have
\begin{eqnarray}\label{A L in}
& & 
\int_{\Omega}|(a(z, u(z), \nabla u(z)), \nabla v(z))_{\mathbb{R}^{N}}|dz\cr
& &
\le \int_{\Omega}|(a(z, u(z), \nabla u(z))|\cdot|\nabla v(z)|dz\cr
& &
\le \int_{\Omega}  (|k_{0}(z)| + c_{0}(|u(z)|^{r_{1}(z)} + |\nabla u(z)|^{p(z) - 1})) \cdot |\nabla v(z)|dz\cr
& &
= \int_{\Omega} |k_{0}(z)|\cdot|\nabla v(z)|dz + c_{0}\int_{\Omega}|u(z)|^{r_{1}(z)}\cdot |\nabla v(z)|dz\cr
& &
+ c_{0}\int_{\Omega}|\nabla u(z)|^{p(z) - 1}\cdot |\nabla v(z)|dz\cr
& &
\le 2\|k_{0}\|_{q(\cdot)}\|\nabla v\|_{p(\cdot)} + 2c_{0}\|u^{r_{1}}\|_{q(\cdot)}
\|\nabla v\|_{p(\cdot)} + 2c_{0}\||\nabla u|^{p(\cdot) - 1}\|_{q(\cdot)}\|\nabla v\|_{p(\cdot)}\cr
\end{eqnarray}

Next let us observe that for every $u \in W^{1, p(\cdot)}(\Omega)$ the map
$$W^{1, p(\cdot)}(\Omega) \ni v \to \int_{\Omega}(a(z, u(z), \nabla u(z)), \nabla v(z))_{\mathbb{R}^{N}}dz \in \mathbb{R}$$
is obviously linear. Using the inequality (\ref{A L in}) for every $v \in W^{1, p(\cdot)}(\Omega)$ we have
$$\quad \langle A(u), v \rangle \le 2(\|k_{0}\|_{q(\cdot)} + c_{0}\|x^{r_{1}}\|_{q(\cdot)} + \||\nabla u|^{p(\cdot) - 1}\|_{q(\cdot)})\cdot
\|\nabla v\|_{p(\cdot)}$$
$$\quad \le2(\|k_{0}\|_{q(\cdot)} + c_{0}\|u^{r_{1}}\|_{q(\cdot)} + c_{0}\||\nabla u|^{p(\cdot) - 1}\|_{q(\cdot)})\cdot
\|v\|_{W^{1, p(\cdot)}(\Omega)}$$
and we see the this map is also continuous. For the second part let us observe that we can repeat proofs from Lemma 2.31, 2.32 and 2.35 in \cite{Roubicek}.
\end{proof}
In next Lemmas \ref{A L lemma 3} - \ref{A L lemma 5}  we assume that $\Omega$ is bounded with Lipschitz continuous boundary $\pO$. We also assume that $p \in C(\overline{\Omega})$ with $1 < p(z)$ for all $z \in \overline{\Omega}$ and  one of the following conditions is satisfied:
$$p(z) < N \quad \forall z \in  \overline{\Omega}$$
$$p(z) = N \quad \forall z \in \overline{\Omega}$$
$$p(z) > N  \quad \forall z \in \overline{\Omega}.$$
We assume the 
$a: \Omega \times \mathbb{R} \times \mathbb{R}^{N} \to \mathbb{R}^{N}$ is Carath\'{e}odory function. Let us also fix a sequence $\{u_{n}\}_{n \in \mathbb{N}} \subset W^{1, p(\cdot)}(\Omega)$ and a function $u \in  W^{1, p(\cdot)}(\Omega)$ such that
$$u_{n}  \rightharpoonup u \quad \textrm{weakly in} \quad W^{1, p(\cdot)}(\Omega).$$
\begin{lemma}\label{A L lemma 3}
	 If the function $a$ satisfies condition A1, then from the sequence $\{u_{n}\}_{n \in \mathbb{N}}$
 we can choose a subsequence $\{u_{n_{k}}\}_{k \in \mathbb{N}}$ such that
$$\lim_{k \to +\infty}\int_{\Omega}|(a(z, u_{n_{k}}(z), \nabla u(z)) -
a(z, u(z), \nabla u(z)),
\nabla u_{n_{k}}(z) - \nabla u(z))_{\mathbb{R}^{N}}|dz = 0.$$	
\end{lemma}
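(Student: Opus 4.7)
The plan is to apply the pointwise Schwarz inequality followed by H\"older's inequality (Theorem \ref{N p theorem 1}) so that the integral in question is bounded by
\[
2\,\|a(\cdot,u_{n},\nabla u)-a(\cdot,u,\nabla u)\|_{q(\cdot)}\cdot\|\nabla u_{n}-\nabla u\|_{p(\cdot)}.
\]
Because $u_{n}\rightharpoonup u$ in $W^{1,p(\cdot)}(\Omega)$ the sequence $\{\nabla u_{n}-\nabla u\}$ is bounded in $L^{p(\cdot)}(\Omega;\mathbb{R}^{N})$, so the whole problem reduces to producing a subsequence $\{u_{n_{k}}\}$ along which $a(\cdot,u_{n_{k}},\nabla u)\to a(\cdot,u,\nabla u)$ strongly in $L^{q(\cdot)}(\Omega;\mathbb{R}^{N})$.

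To build that subsequence I would exploit condition A1. Set $s(z):=r_{1}(z)q(z)$. Since $p\in C(\oO)$ and $p_{-}>1$, the conjugate exponent $q=p/(p-1)$ belongs to $C(\oO)$, hence $s\in C(\oO)$, and A1 gives $1\le s(z)<p^{*}(z)$ for every $z\in\oO$. Thus Remark \ref{N p remark 6} applies and yields $u_{n}\to u$ strongly in $L^{s(\cdot)}(\Omega)$. Then Theorem \ref{N p theorem 4} furnishes a subsequence $\{u_{n_{k}}\}$ and a function $v\in L^{s(\cdot)}(\Omega)$ with $u_{n_{k}}(z)\to u(z)$ and $|u_{n_{k}}(z)|\le|v(z)|$ for almost all $z\in\Omega$.

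By the Carath\'eodory property of $a$, $a(z,u_{n_{k}}(z),\nabla u(z))\to a(z,u(z),\nabla u(z))$ a.e. Using A1 together with the elementary inequalities $(a+b)^{q}\le 2^{q-1}(a^{q}+b^{q})$ and $(a+b+c)^{q}\le 3^{q-1}(a^{q}+b^{q}+c^{q})$ from the auxiliary section, and the identity $(p(z)-1)q(z)=p(z)$, the scalar sequence $|a(z,u_{n_{k}},\nabla u)-a(z,u,\nabla u)|^{q(z)}$ is pointwise dominated by an $L^{1}(\Omega)$ function built out of $|k_{0}|^{q(\cdot)}$, $|v|^{s(\cdot)}$, $|u|^{s(\cdot)}$ and $|\nabla u|^{p(\cdot)}$ (the first term is integrable because $k_{0}\in L^{q(\cdot)}$, the two middle ones because $u,v\in L^{s(\cdot)}$, and the last one because $\nabla u\in L^{p(\cdot)}(\Omega;\mathbb{R}^{N})$). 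Lebesgue's dominated convergence theorem then gives $\rho_{q(\cdot)}(a(\cdot,u_{n_{k}},\nabla u)-a(\cdot,u,\nabla u))\to 0$, and by Lemma \ref{N p lemma 1} this is precisely $L^{q(\cdot)}$-norm convergence; combined with the H\"older bound of the first paragraph the lemma follows.

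The step demanding the most care is the compatibility of exponents: one must see that the auxiliary exponent $s=r_{1}q$ lies strictly below $p^{*}$ at every point of $\oO$ (so that the compact embedding applies) and that, after passage to an a.e.\ convergent subsequence, the $L^{s(\cdot)}$-majorant of $\{u_{n_{k}}\}$ interacts correctly with A1 to produce a single $L^{1}$-majorant. Both facts are built into the sharp form of A1's bound $r_{1}(z)<p^{*}(z)/q(z)$, and once they are in place the argument reduces to invoking the modular--norm equivalence.
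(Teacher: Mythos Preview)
Your proposal is correct and follows essentially the same route as the paper: bound the integral via the Schwarz and H\"older inequalities, note that $\|\nabla u_{n}-\nabla u\|_{p(\cdot)}$ is bounded, then use the compact embedding $W^{1,p(\cdot)}\hookrightarrow L^{r_{1}(\cdot)q(\cdot)}$ (Remark \ref{N p remark 6}) to extract an a.e.\ convergent subsequence with an $L^{r_{1}(\cdot)q(\cdot)}$ majorant, and finish by dominated convergence on the modular together with Lemma \ref{N p lemma 1}. The paper additionally records the preliminary check that $a(\cdot,u_{n},\nabla u)-a(\cdot,u,\nabla u)\in L^{q(\cdot)}$, but this is implicit in your domination argument.
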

\begin{proof}
First we show that for every $n \in \mathbb{N}$ the function
$$a(\cdot, u_{n}(\cdot), \nabla u(\cdot)) -
a(\cdot, u(\cdot), \nabla u(\cdot)) \in L^{q(\cdot)}(\Omega).$$
Indeed using inequalities (\ref{A l in 1}), (\ref{A l in 2}) and condition A1 we have
\begin{align*}
& 
\int_{\Omega}|a(z, u_{n}(z), \nabla u(z)) -
a(z, u(z), \nabla u(z))|^{q(z)}dz\\
& 
\quad \le \int_{\Omega}2^{q(z) - 1}\left(|a(z, u_{n}(z), \nabla u(z))|^{q(z)}
+ |a(z, u(z), \nabla u(z))|^{q(z)}\right)dz\\
& 
\quad \le \int_{\Omega}2^{q(z) - 1}\left(|k_{0}(z)| + c_{0}(|u_{n}(z)|^{r_{1}(z)} + |\nabla u(z)|^{p(z) - 1}\right)^{q(z)}dz\\
& 
\qquad + \int_{\Omega}2^{q(z) - 1}\left(|k_{0}(z)| + c_{0}(|u(z)|^{r_{1}(z)} + |\nabla u(z)|^{p(z) - 1}\right)^{q(z)}dz\\
& 
\quad \le \int_{\Omega}2^{q(z) - 1}3^{q(z) - 1}\left(|k_{0}(z)|^{q(z)} + c_{0}^{q(z)}(|u_{n}(z)|^{r_{1}(z)q(z)} + |\nabla u(z)|^{p(z)}\right)dz\\
& 
\qquad + \int_{\Omega}2^{q(z) - 1}3^{q(z) - 1}\left(|k_{0}(z)|^{q(z)} + c_{0}^{q(z)}(|u(z)|^{r_{1}(z)q(z)} + |\nabla u(z)|^{p(z)}\right)dz.\\
\end{align*}	
For every $n \in \mathbb{N}$ using Schwarz inequality and H\"{o}lder's inequality we obtain
\begin{eqnarray}
& &
\int_{\Omega}|(a(z, u_{n}(z), \nabla u(z)) -
a(z, u(z), \nabla u(z)),
\nabla u_{n}(z) - \nabla u(z))_{\mathbb{R}^{N}}|dz\cr
& &
\quad \le \int_{\Omega}|(a(z, u_{n}(z), \nabla u(z)) -
a(z, u(z), \nabla u(z))|\cdot
|\nabla u_{n}(z) - \nabla u(z)|dz\cr
& &
\quad \le 2\|(a(z, u_{n}(z), \nabla u(z)) -
a(z, u(z), \nabla u(z))\|_{q(\cdot)}\cdot
\|\nabla u_{n} - \nabla u\|_{p(\cdot)}\cr
\end{eqnarray}

Let us observe that the sequence $\{\|\nabla u_{n} - \nabla u\|_{p(\cdot)}\}_{n \in \mathbb{N}}$ is bounded.

Using Lemma \ref{N p lemma 1} it is enough to find such subsequence $\{u_{n_{k}}\}_{k \in \mathbb{N}}$ that
$$\lim_{k \to +\infty} \int_{\Omega}|a(z, u_{n_{k}}(z), \nabla u(z)) -
a(z, u(z), \nabla u(z))|^{q(z)}dz = 0.$$
Using Remark \ref{N p remark 6} we know that
$$u_{n} \to u \quad \textrm{strongly in} \quad L^{r_{1}(\cdot)q(\cdot)}(\Omega).$$ 
So we can choose a subsequence 
$\{u_{n_{k}}\}_{k \in \mathbb{N}}$ and find a function
$g \in L^{r_{1}(\cdot)q(\cdot)}(\Omega)$ such that
\begin{equation}\label{A l eq1}
u_{n_{k}}(z) \to u(z) \quad \textrm{for almost all} \quad z \in \Omega
\end{equation} 
and
\begin{equation}\label{A l eq2}
|u_{n_{k}}(z)| \le |g(z)| \quad \textrm{for almost all} \quad z \in \Omega \quad \textrm{and} \quad \forall k \in \mathbb{N}.
\end{equation}
We check that the assumptions of the Lebesgue convergence theorem are satisfied for this subsequence. Using (\ref{A l eq1}) and the fact that function $a$ is a Carath\'{e}odory function we have for almost all $z \in \Omega$
$$\lim_{k \to +\infty} |a(z, u_{n_{k}}(z), \nabla u(z)) -
a(z, u(z), \nabla u(z))|^{q(z)} = 0.$$
Using inequalities (\ref{A l in 1}), (\ref{A l in 2}), condition A1 and (\ref{A l eq2}) for every $k \in \mathbb{N}$ and for almost all $z \in \Omega$ we get
\begin{align*}
&
|a(z, u_{n_{k}}(z), \nabla u(z)) -
a(z, u(z), \nabla u(z))|^{q(z)}\\
&\ \le 2^{q(z) - 1} 3^{q(z) - 1}\cdot\\
&\ \cdot\left(2|k_{0}(z)|^{q(z)} + c_{0}^{q(z)}(|u_{n_{k}}(z)|^{r_{1}(z)q(z)} + c_{0}^{q(z)}(|u(z)|^{r_{1}(z)q(z)} + 2c_{0}^{q(z)}|\nabla u(z)|^{p(z)}\right)\\
&\  \le 2^{q(z) - 1} 3^{q(z) - 1}\cdot\\
&\ \cdot\left(2|k_{0}(z)|^{q(z)} + c_{0}^{q(z)}(|g(z)|^{r_{1}(z)q(z)} + c_{0}^{q(z)}(|u(z)|^{r_{1}(z)q(z)} + 2c_{0}^{q(z)}|\nabla u(z)|^{p(z)}\right).
\end{align*}
So using the Lebesgue convergence theorem we finally get
$$\lim_{k \to +\infty} \int_{\Omega}|a(z, u_{n_{k}}(z), \nabla u(z)) -
a(z, u(z), \nabla u(z))|^{q(z)}dz = 0.
$$
\end{proof}
Let us denote for all $z \in \Omega$ and for all $n \in \mathbb{N}$
\begin{equation}
\theta^{1}_{n}(z) =  (a(z, u_{n}(z), \nabla u_{n}(z)) - a(z, u_{n}(z), \nabla u(z)), \nabla u_{n}(z) - \nabla u(z))_{\mathbb{R}^{N}}
\end{equation}
and
\begin{equation}
\theta^{2}_{n}(z) =  (a(z, u_{n}(z), \nabla u(z)) - a(z, u(z), \nabla u(z)), \nabla u_{n}(z) - \nabla u(z))_{\mathbb{R}^{N}}.
\end{equation}
We will use this notation in Lemma \ref{A L lemma 4} and Lemma \ref{A L lemma 5}.
 \begin{lemma}\label{A L lemma 4}
 	 If the function $a$ satisfies conditions A1 and A2, then from the sequence $\{u_{n}\}_{n \in \mathbb{N}} \subset W^{1, p(\cdot)}(\Omega)$ such that
 	$$\limsup_{n \to +\infty}\langle A(x_{n}), x_{n} - x\rangle \le 0$$
 	 we can choose a subsequence $\{u_{n_{k}}\}_{k \in \mathbb{N}}$ such that
 	$$\lim_{k \to +\infty}\langle A(u_{n_{k}}) - A(u), u_{n_{k}} - u\rangle = 0.$$
 \end{lemma}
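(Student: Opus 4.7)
The plan is to decompose the duality expression into two pieces corresponding exactly to $\theta^1_n$ and $\theta^2_n$, control the $\theta^2$ piece via Lemma \ref{A L lemma 3}, and control the $\theta^1$ piece by using monotonicity (condition A2) together with the hypothesis $\limsup_{n} \langle A(u_n), u_n - u\rangle \le 0$.

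More concretely, first I would observe the algebraic identity
\begin{equation*}
\langle A(u_n) - A(u), u_n - u\rangle \;=\; \int_\Omega \theta^1_n(z)\,dz \;+\; \int_\Omega \theta^2_n(z)\,dz,
\end{equation*}
obtained by inserting and subtracting the intermediate term $a(z, u_n(z), \nabla u(z))$. Condition A2 then yields $\theta^1_n(z) \ge 0$ for a.a.~$z \in \Omega$ and all $n$, which will later be used as a one-sided bound.

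Next I would show that $\langle A(u), u_n - u\rangle \to 0$. To do this, I would check that $a(\cdot, u(\cdot), \nabla u(\cdot)) \in L^{q(\cdot)}(\Omega;\mathbb{R}^N)$ by applying condition A1 term by term: the part $|k_0|$ is in $L^{q(\cdot)}$ by assumption; the part $|\nabla u|^{p(\cdot)-1}$ satisfies $(p(z)-1)q(z) = p(z)$ and so lies in $L^{q(\cdot)}$ since $u \in W^{1,p(\cdot)}(\Omega)$; the part $|u|^{r_1(\cdot)}$ lies in $L^{q(\cdot)}$ because $r_1(z)q(z) < p^*(z)$, so Theorem \ref{N p theorem 7} and Lemma \ref{N p lemma 1} place $|u|^{r_1(\cdot)q(\cdot)}$ in $L^1(\Omega)$. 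Since $\nabla u_n \rightharpoonup \nabla u$ weakly in $L^{p(\cdot)}(\Omega;\mathbb{R}^N)$ and $L^{q(\cdot)}(\Omega;\mathbb{R}^N)$ is the dual of $L^{p(\cdot)}(\Omega;\mathbb{R}^N)$ (here I use $1<p_-\le p_+<+\infty$), the claim follows. Combined with the hypothesis, this gives
\begin{equation*}
\limsup_{n \to +\infty} \langle A(u_n) - A(u), u_n - u\rangle \;\le\; 0.
\end{equation*}

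At this point I apply Lemma \ref{A L lemma 3} to extract a subsequence $\{u_{n_k}\}_{k \in \mathbb{N}}$ for which $\int_\Omega |\theta^2_{n_k}(z)|\,dz \to 0$, hence $\int_\Omega \theta^2_{n_k}\,dz \to 0$. Combining with the decomposition and the $\limsup$ estimate,
\begin{equation*}
\limsup_{k\to+\infty} \int_\Omega \theta^1_{n_k}(z)\,dz \;=\; \limsup_{k\to+\infty} \langle A(u_{n_k}) - A(u), u_{n_k} - u\rangle \;\le\; 0.
\end{equation*}
Because $\theta^1_{n_k}(z) \ge 0$ a.e., the $\liminf$ of $\int_\Omega \theta^1_{n_k}\,dz$ is non-negative. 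Thus $\int_\Omega \theta^1_{n_k}\,dz \to 0$, and adding back the $\theta^2_{n_k}$ contribution gives $\langle A(u_{n_k}) - A(u), u_{n_k} - u\rangle \to 0$ as required.

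The only delicate point is the verification that $a(\cdot, u, \nabla u) \in L^{q(\cdot)}(\Omega;\mathbb{R}^N)$, which requires the compatibility of the growth exponent $r_1$ with the Sobolev embedding; the rest of the argument is a clean combination of the monotonicity inequality, Lemma \ref{A L lemma 3}, and the standard squeeze between $\liminf \ge 0$ and $\limsup \le 0$.
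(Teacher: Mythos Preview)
Your proposal is correct and follows essentially the same approach as the paper's proof: the identical decomposition $\langle A(u_n)-A(u),u_n-u\rangle=\int_\Omega\theta^1_n+\int_\Omega\theta^2_n$, the use of Lemma~\ref{A L lemma 3} to kill the $\theta^2$ part along a subsequence, and the squeeze between $\liminf\ge 0$ (from A2) and $\limsup\le 0$ (from the hypothesis plus $\langle A(u),u_n-u\rangle\to 0$). The only cosmetic difference is that the paper obtains $\langle A(u),u_n-u\rangle\to 0$ directly from $A(u)\in W^{1,p(\cdot)}(\Omega)^*$ (established in Lemma~\ref{A L lemma 2}) and weak convergence $u_n\rightharpoonup u$, whereas you reprove this by checking $a(\cdot,u,\nabla u)\in L^{q(\cdot)}$ and pairing with $\nabla u_n-\nabla u$; both are fine.
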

\begin{proof}
Because $u_{n} \rightharpoonup x$ converges weakly in $W^{1, p(\cdot)}(\Omega)$ so $\langle A(u),x_{u}  - u\rangle \to 0$ and in a consequence we have
\begin{equation}\label{L 4 eq 1}
\limsup_{n \to +\infty}\langle A(u_{n}) - A(u),u_{n} - u\rangle \le 0.
\end{equation}
Next using Lemma \ref{A L lemma 3}  we can find such subsequence $\{u_{n_{k}}\}_{k \in \mathbb{N}}$ that
\begin{equation}\label{L 4 eq 2}
\int_{\Omega}\theta^{2}_{n_{k}}(z)dz \to 0.
\end{equation}
Using condition A2 we see that for every $n \in \mathbb{N}$
\begin{equation}\label{L 4 eq 3}
\int_{\Omega}\theta^{1}_{n}(z)dz \ge 0.
\end{equation} 
On the other hand for every $n \in \mathbb{N}$ we can write
\begin{equation}\label{L 4 eq 4}
\langle A(u_{n}) - A(u), u_{n} - u\rangle = \int_{\Omega}\theta^{1}_{n}(z)dz + \int_{\Omega}\theta^{2}_{n}(z)dz.
\end{equation}

Now using (\ref{L 4 eq 2}), (\ref{L 4 eq 3}) and (\ref{L 4 eq 4}) we can conclude that 
\begin{equation}\label{L 4 eq 5}
\liminf_{k \to +\infty}\langle A(u_{n_{k}}) - A(u),u_{n_{k}} - u\rangle \ge 0
\end{equation}

and finally combining (\ref{L 4 eq 1}) and  (\ref{L 4 eq 5}) we get
$$\lim_{k \to +\infty}\langle A(u_{n_{k}}) - A(u),u_{n_{k}} - u\rangle = 0.$$
\end{proof}
Let us denote for every $n \in \mathbb{N}$ and for every $z \in \Omega$
$$\xi_{n}(z) = (a(z, u_{n}(z), \nabla u_{n}(z)) - a(z, u(z), \nabla u(z)), \nabla u_{n}(z) - \nabla u(z))_{\mathbb{R}^{N}}.$$
\begin{lemma}\label{A L lemma 5}
		 If the function $a$ satisfies conditions A1 and A2, then we can find a subsequence $\{\xi_{n_{k}}\}_{k \in \mathbb{N}}$ and a function $l \in L^{1}(\Omega)_{+}$ such that
	$$\xi_{n_{k}}(z) \to 0 \quad \textrm{for almost all}\quad z \in \Omega$$and
	
	$$|\xi_{n_{k}}(z)| \le l(z) \quad  \textrm{for almost all}\quad z \in \Omega \quad \textrm{and}\quad \forall k \in \mathbb{N}.$$ 
\end{lemma}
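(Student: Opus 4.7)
The plan is to show $\xi_{n_k} \to 0$ in $L^1(\Omega)$ along a subsequence by combining Lemmas \ref{A L lemma 3} and \ref{A L lemma 4}, and then to promote this $L^1$-convergence to a.e.\ convergence with an $L^1$-dominator via a fast subsequence. I read the statement as implicitly inheriting the S+ hypothesis $\limsup_{n \to +\infty}\langle A(u_n), u_n - u\rangle \le 0$ that powers Lemma \ref{A L lemma 4}; without this the quantity $\xi_n$ need not even converge to $0$ in $L^1$.

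First I would apply Lemma \ref{A L lemma 4} to pass to a subsequence $\{u_{n_k}\}$ with
$$\int_\Omega \xi_{n_k}(z)\,dz \;=\; \langle A(u_{n_k}) - A(u), u_{n_k} - u\rangle \longrightarrow 0,$$
and then invoke Lemma \ref{A L lemma 3} along this subsequence to extract a further subsequence (still denoted $\{u_{n_k}\}$) for which
$$\int_\Omega |\theta^2_{n_k}(z)|\,dz \longrightarrow 0.$$

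Next I use the pointwise decomposition $\xi_n = \theta^1_n + \theta^2_n$ together with condition A2, which forces $\theta^1_n(z) \ge 0$ for a.e.\ $z\in\Omega$. Then
$$0 \le \int_\Omega \theta^1_{n_k}(z)\,dz \;=\; \int_\Omega \xi_{n_k}(z)\,dz - \int_\Omega \theta^2_{n_k}(z)\,dz \longrightarrow 0,$$
so $\theta^1_{n_k} \to 0$ in $L^1(\Omega)$, and therefore
$$\int_\Omega |\xi_{n_k}(z)|\,dz \le \int_\Omega \theta^1_{n_k}(z)\,dz + \int_\Omega |\theta^2_{n_k}(z)|\,dz \longrightarrow 0,$$
i.e.\ $\xi_{n_k} \to 0$ in $L^1(\Omega)$.

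To finish, I would pick a sub-subsequence $\{n_{k_j}\}_{j\in\mathbb{N}}$ with $\|\xi_{n_{k_j}}\|_{L^1(\Omega)} \le 2^{-j}$ and set
$$l(z) \;:=\; \sum_{j=1}^{+\infty} |\xi_{n_{k_j}}(z)|.$$
The Beppo Levi theorem gives $l \in L^1(\Omega)_+$; pointwise finiteness of this series for a.e.\ $z$ forces its terms to vanish, so $\xi_{n_{k_j}}(z) \to 0$ for a.e.\ $z\in\Omega$, while $|\xi_{n_{k_j}}(z)| \le l(z)$ a.e.\ for every $j$ is immediate from the definition of $l$. Re-indexing the sub-subsequence as $\{n_k\}$ yields the claim. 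The only genuinely delicate point is recognising that Lemma \ref{A L lemma 4}'s $\limsup$ hypothesis is in force at this step; the rest is a routine chain of subsequence extractions and the standard construction of an $L^1$-dominator from a geometrically fast $L^1$-convergent sequence.
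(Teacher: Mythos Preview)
Your proof is correct and follows essentially the same route as the paper: decompose $\xi_n = \theta^1_n + \theta^2_n$, use Lemma~\ref{A L lemma 3} for $\theta^2_{n_k} \to 0$ in $L^1$, Lemma~\ref{A L lemma 4} for $\int_\Omega \xi_{n_k} \to 0$, and the nonnegativity of $\theta^1_n$ from A2 to force $\xi_{n_k} \to 0$ in $L^1$, then extract a dominated a.e.\ convergent subsequence. The only cosmetic difference is that the paper cites Theorem~\ref{N p theorem 4} for this last extraction, whereas you spell out the standard Beppo Levi construction of the dominator explicitly; your observation that the $\limsup$ hypothesis from Lemma~\ref{A L lemma 4} is tacitly in force here is also correct.
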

\begin{proof}
Using Lemma \ref{A L lemma 3} we know that we can find a subsequence $\{x_{n_{k}}\}_{k \in \mathbb{N}}$ such that 
\begin{equation}\label{Lemma5 eq 1}
\theta^{2}_{n_{k}} \to 0 \quad\textrm{strongly in}\quad L^{1}(\Omega).
\end{equation}

Using condition A2 we see that for every $n \in \mathbb{N}$ and for almost all $z \in \Omega$ we have
\begin{equation}\label{Lemma5 eq 2}
\theta^{1}_{n}(z) \ge 0.
\end{equation}

We know from Lemma \ref{A L lemma 4} that
\begin{equation}\label{Lemma5 eq 3}
\int_{\Omega}\xi_{n_{k}}(z)dz \to 0.
\end{equation}

Because for all  $z \in \Omega$ and for all $n \in \mathbb{N}$ we have
\begin{equation}\label{Lemma5 eq 4}
\xi_{n}(z) = \theta^{1}_{n}(z) + \theta^{2}_{n}(z)
\end{equation}

so combining (\ref{Lemma5 eq 1}), (\ref{Lemma5 eq 3}) and (\ref{Lemma5 eq 4}) we can conclude that
\begin{equation}\label{Lemma5 eq 5}
\theta^{1}_{n_{k}} \to 0 \quad\textrm{strongly in}\quad L^{1}(\Omega)
\end{equation}

and hence using (\ref{Lemma5 eq 2}) and (\ref{Lemma5 eq 5}) we have

$$\xi_{n_{k}} \to 0 \quad\textrm{strongly in}\quad L^{1}(\Omega).$$

At the end we can extract a subsubsequence $\{\xi_{n_{k_{j}}}\}_{j \in \mathbb{N}}$ and find a function $l \in L^{1}(\Omega)_{+}$ such that
$$\xi_{n_{k_{j}}}(z) \to 0 \quad\textrm{for almost all}\quad z \in \Omega$$
and
$$|\xi_{n_{k_{j}}}(z)| \le l(z) \quad\textrm{for almost all}\quad z \in \Omega \quad \textrm{and} \quad \forall j \in \mathbb{N}.$$
\end{proof}	
Now we ready to prove the main result.
\section{Main result}
\begin{theorem}\label{Main theorem}
Let $\Omega$ be a bounded set with Lipschitz continuous boundary $\pO$. Let us assume that function $p \in C(\overline{\Omega})$ with $1 < p(z)$ for all $z \in \overline{\Omega}$ and  let $a: \Omega \times \mathbb{R} \times \mathbb{R}^{N} \to \mathbb{R}^{N}$ be a Carath\'{e}odory function such that it satisfies conditions A1 - A3. Let us also assume that one of the following conditions hold
$$p(z) < N \quad \forall z \in \overline{\Omega}$$
$$p(z) = N \quad \forall z \in \overline{\Omega}$$
$$p(z) > N  \quad \forall z \in \overline{\Omega}.$$
 Then the operator
$$A: W^{1, p(\cdot)}(\Omega) \to W^{1, p(\cdot)}(\Omega)^{*}$$
$$\langle A(u), v\rangle = \int_{\Omega}(a(z, u(z), \nabla u(z)), \nabla v(z))_{\mathbb{R}^{N}}dz \quad \forall u, v \in W^{1, p(\cdot)}(\Omega)$$
has S+ property. 
\end{theorem}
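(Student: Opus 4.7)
The plan is to reduce the problem to an a.e. convergence argument for the gradients, and then upgrade a.e. convergence to strong convergence in $L^{p(\cdot)}(\Omega;\mathbb{R}^{N})$ via Vitali's theorem. Start from a sequence $u_{n}\rightharpoonup u$ in $W^{1,p(\cdot)}(\Omega)$ with $\limsup_{n}\langle A(u_{n}),u_{n}-u\rangle\le 0$. Apply Lemma~\ref{A L lemma 5} to extract a subsequence (which I will keep calling $u_{n}$) such that $\xi_{n}(z)\to 0$ for a.a.\ $z\in\Omega$ with domination $|\xi_{n}|\le l\in L^{1}(\Omega)_{+}$. Writing $\xi_{n}=\theta_{n}^{1}+\theta_{n}^{2}$ and using Lemma~\ref{A L lemma 3} (together with Theorem~\ref{N p theorem 4}), passing to a further subsequence I get $\theta_{n}^{2}(z)\to 0$ a.e., hence $\theta_{n}^{1}(z)\to 0$ a.e.; this subsequence can also be arranged to satisfy $u_{n}(z)\to u(z)$ a.e.\ by Remark~\ref{N p remark 6} and Theorem~\ref{N p theorem 4}.

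Next I claim that $\nabla u_{n}(z)\to\nabla u(z)$ for a.a.\ $z\in\Omega$. Fix such a point. I first argue the sequence $\{\nabla u_{n}(z)\}$ is bounded: if $|\nabla u_{n}(z)|\to+\infty$ along a subsequence, then expanding $\theta_{n}^{1}(z)=(a(z,u_{n},\nabla u_{n}),\nabla u_{n})-(a(z,u_{n},\nabla u_{n}),\nabla u)-(a(z,u_{n},\nabla u),\nabla u_{n})+(a(z,u_{n},\nabla u),\nabla u)$ and applying A3 to the first term, A1 to the two cross terms, and the boundedness of $\{u_{n}(z)\}$, I see that the positive $c_{1}|\nabla u_{n}(z)|^{p(z)}$ term dominates (since $p(z)>1$ makes the $|\nabla u_{n}(z)|^{p(z)-1}$ and $|\nabla u_{n}(z)|$ contributions of lower order), forcing $\theta_{n}^{1}(z)\to+\infty$, contradicting $\theta_{n}^{1}(z)\to 0$. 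Hence $\nabla u_{n}(z)$ has bounded subsequential limits; any limit $\eta$ satisfies $(a(z,u(z),\eta)-a(z,u(z),\nabla u(z)),\eta-\nabla u(z))_{\mathbb{R}^{N}}=0$ by continuity of $a(z,\cdot,\cdot)$, and the strict monotonicity condition A2 forces $\eta=\nabla u(z)$. So every subsequential limit equals $\nabla u(z)$, giving $\nabla u_{n}(z)\to\nabla u(z)$ a.e.

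To upgrade the a.e.\ convergence of the gradients to convergence in $L^{p(\cdot)}(\Omega;\mathbb{R}^{N})$, I apply Vitali's theorem (Theorem~\ref{N p theorem 5}), so the key is $p(\cdot)$-uniform integrability of $\{|\nabla u_{n}-\nabla u|^{p(\cdot)}\}$. For this, starting from the pointwise identity for $\theta_{n}^{1}$ above, I apply A3, A1 and Young's inequality with a small parameter $\varepsilon$ on the cross-terms $|a(z,u_{n},\nabla u_{n})|\,|\nabla u|$ and $|a(z,u_{n},\nabla u)|\,|\nabla u_{n}|$ to absorb the $|\nabla u_{n}|^{p(z)-1}|\nabla u|$ and $|\nabla u_{n}|$ parts into $\tfrac{c_{1}}{2}|\nabla u_{n}|^{p(z)}$. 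This yields
\begin{equation*}
|\nabla u_{n}(z)|^{p(z)}\le C\bigl(\theta_{n}^{1}(z)+|k_{0}(z)|^{q(z)}+|k_{1}(z)|+|u_{n}(z)|^{r_{2}(z)}+|u_{n}(z)|^{r_{1}(z)q(z)}+|\nabla u(z)|^{p(z)}+1\bigr).
\end{equation*}
The $\theta_{n}^{1}$ term is uniformly integrable because $\theta_{n}^{1}\to 0$ in $L^{1}(\Omega)$ (from $\xi_{n},\theta_{n}^{2}\to 0$ in $L^{1}$, as in Lemma~\ref{A L lemma 4} and Lemma~\ref{A L lemma 5}); the two powers of $|u_{n}|$ are uniformly integrable because $r_{2}(z)<p^{*}(z)$ and $r_{1}(z)q(z)<p^{*}(z)$ yield strong convergence of $u_{n}$ in $L^{r_{2}(\cdot)}(\Omega)$ and $L^{r_{1}(\cdot)q(\cdot)}(\Omega)$ by Remark~\ref{N p remark 6}, so the corresponding powers converge in $L^{1}$ by Lemma~\ref{N p lemma 1}; the remaining terms are fixed $L^{1}$ functions. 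Therefore $\{|\nabla u_{n}|^{p(\cdot)}\}$ is uniformly integrable, and so is $\{|\nabla u_{n}-\nabla u|^{p(\cdot)}\}$ by inequality \eqref{A l in 1}. Vitali's theorem then gives $\nabla u_{n}\to\nabla u$ in $L^{p(\cdot)}(\Omega;\mathbb{R}^{N})$; combined with $u_{n}\to u$ in $L^{p(\cdot)}(\Omega)$ from Remark~\ref{N p remark 6} and the norm characterization of Remark~\ref{N p remark 1}, I conclude $u_{n}\to u$ strongly in $W^{1,p(\cdot)}(\Omega)$. Since the argument produced a convergent subsequence from every subsequence, the Urysohn subsequence principle gives strong convergence of the whole original sequence.

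The main obstacle is obtaining a.e.\ convergence of the gradients from the monotonicity condition A2 alone (since $A$ is not monotone due to the $u$-dependence in $a$), which I handle by the two-term decomposition $\xi_{n}=\theta_{n}^{1}+\theta_{n}^{2}$ exploited in Lemmas~\ref{A L lemma 3}--\ref{A L lemma 5}, and the uniform integrability step, which is the quantitative heart of the proof and the place where the subcriticality assumptions $r_{1}q<p^{*}$ and $r_{2}<p^{*}$ of A1 and A3 are essential.
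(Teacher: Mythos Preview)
Your proof is correct and follows essentially the same strategy as the paper: extract a subsequence via Lemma~\ref{A L lemma 5}, establish a.e.\ convergence of the gradients by a pointwise boundedness-plus-strict-monotonicity argument, and then apply Vitali's theorem. The only tactical differences are that the paper works throughout with $\xi_{n}$ and its $L^{1}$ majorant $l$ (rather than splitting further into $\theta_{n}^{1}$, $\theta_{n}^{2}$ pointwise), and for uniform integrability it integrates the pointwise estimate over a small set $C$ and applies H\"older there, whereas you use Young's inequality with a small parameter to absorb the cross-terms---both routes are valid and equivalent in spirit.
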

\begin{proof}
Let us take a sequence $\{u_{n}\}_{n \in \mathbb{N}} \subset W^{1, p(\cdot)}(\Omega)$ such that
$$u_{n} \rightharpoonup u \quad \textrm{weakly in}\quad  W^{1, p(\cdot)}(\Omega)$$
and
$$\limsup_{n \to +\infty}\langle A(u_{n}), u_{n} - u\rangle \le 0.$$
\newline 
First let us observe that by Remark \ref{N p remark 6} the convergence holds
$$u_{n} \to u \quad \textrm{strongly in} \quad L^{p(\cdot)}(\Omega).$$
Now by Remark \ref{N p remark 1} it is enough to show that
\begin{equation}
\nabla u_{n} \to \nabla u \quad \textrm{strongly in} \quad L^{p(\cdot)}(\Omega; \mathbb{R}^{N}).
\end{equation}
Let us observe that using the Urysohn subsequence principle it is enough to find some subsequence $\{u_{n_{k}}\}_{k \in \mathbb{N}}$ such that
$$\nabla u_{n_{k}} \to \nabla u \quad \textrm{strongly in} \quad L^{p(\cdot)}(\Omega; \mathbb{R}^{N}).$$
Now we show how to find such subsequence. Using Remark \ref{N p remark 6} and Lemma \ref{A L lemma 5} we can choose a subsequence
$\{u_{n_{k}}\}_{k \in \mathbb{N}}$ and find functions $g_{1} \in L^{r_{1}(\cdot)}(\Omega)$, $g_{2} \in  L^{r_{2}(\cdot)}(\Omega)$ and $l \in L^{1}(\Omega)_{+}$ such that for almost all $z \in \Omega$ and for every $k \in \mathbb{N}$:
\begin{equation}\label{M r eq 1}
u_{n_{k}}(z) \to u(z)
\end{equation}
\begin{equation}\label{M r eq 2}
|u_{n_{k}}(z)| \le g_{1}(z) 
\end{equation}
\begin{equation}\label{M r eq 3}
|u_{n_{k}}(z)| \le g_{2}(z) 
\end{equation}
\begin{equation}\label{M r eq 4}
(a(z, u_{n_{k}}(z), \nabla u_{n_{k}}(z)) - a(z, u(z), \nabla u(z)), \nabla u_{n_{k}}(z) - \nabla u(z))_{\mathbb{R}^{N}} \to 0
\end{equation}
\begin{equation}\label{M r eq 5}
l(z) \ge (a(z, u_{n_{k}}(z), \nabla u_{n_{k}}(z)) - a(z, u(z), \nabla u(z)), \nabla u_{n_{k}}(z) - \nabla u(z))_{\mathbb{R}^{N}}.  
\end{equation}
We set $\zeta_{k} = u_{n_{k}}$ for all $k \in \mathbb{N}$. We want to show that
$$\nabla \zeta_{k} \to \nabla u \quad \textrm{strongly in} \quad L^{p(\cdot)}(\Omega;\mathbb{R}^{N}).$$
To prove this we use the Vitali convergence theorem. In the first step we show that
$$\nabla \zeta_{k}(z) \to \nabla u(z) \quad \textrm{for almost all} \quad z \in \Omega.$$ Using conditions A1 and A3 we have for every $k \in \mathbb{N}$ and for almost all $z \in \Omega$
\begin{eqnarray}\label{M r eq 6}
& &
l(z) \ge (a(z, \zeta_{k}(z), \nabla \zeta_{k}(z)) - a(z, u(z), \nabla u(z)), \nabla \zeta_{k}(z) - \nabla u(z))_{\mathbb{R}^{N}}\cr
& &\quad\ \ 
\ge c_{1}|\nabla \zeta_{k}(z)|^{p(z)} -c_{2}|\zeta_{k}(z)|^{r_{2}(z)} - |k_{1}(z)| \cr
& &\qquad\ 
+c_{1}|\nabla u(z)|^{p(z)}
-c_{2}|u(z)|^{r_{2}(z)} - |k_{1}(z)|\cr
& &
\qquad \ - (|k_{0}(z)| + c_{0}(|\zeta_{k}(z)|^{r_{1}(z)} + |\nabla \zeta_{k}(z)|^{p(z) - 1}))|\nabla u(z)|\cr 
& &
\qquad \ - (|k_{0}(z)| + c_{0}(|u(z)|^{r_{1}(z)} + |\nabla u(z)|^{p(z) - 1}))|\nabla \zeta_{k}(z)|\cr
& &\quad\ \ 
\ge c_{1}|\nabla \zeta_{k}(z)|^{p(z)} -c_{2}|g_{2}(z)|^{r_{2}(z)} - |k_{1}(z)|\cr
& &\qquad\ 
 + c_{1}|\nabla u(z)|^{p(z)}
-c_{2}|u(z)|^{r_{2}(z)} - |k_{1}(z)|\cr
& &
\qquad \ - (|k_{0}(z)| + c_{0}(|g_{1}(z)|^{r_{1}(z)} + |\nabla \zeta_{k}(z)|^{p(z) - 1}))|\nabla u(z)|\cr
& &
\qquad \ - (|k_{0}(z)| + c_{0}(|u(z)|^{r_{1}(z)} + |\nabla u(z)|^{p(z) - 1}))|\nabla \zeta_{k}(z)|.\cr
\end{eqnarray}

Let us denote the set
$$\Omega_{0} = \{z \in \Omega \quad|\quad\{\nabla \zeta_{k}(z)\}_{k \in \mathbb{N}} \quad \textrm{is bounded in}\quad \mathbb{R}^{N}\}.$$
From inequality (\ref{M r eq 6}) we can conclude that the set $\Omega_{0}$ is of full measure in $\Omega$. For all $z \in \Omega_{0}$
let us fix an arbitrary subsequence $\{\nabla \zeta_{k_{j_{z}}}(z)\}_{j_{z} \in \mathbb{N}}$ depending on $z$. Now we can choose some subsubsequence $\{\nabla \zeta_{k_{j_{w_{z}}}}(z)\}_{w_{z} \in \mathbb{N}}$ such that it converges in $\mathbb{R}^{N}$. Let us define the function $\beta: \Omega \to \mathbb{R}^{N}$

$$\beta(z) = \lim_{w_{z} \to +\infty}\nabla \zeta_{k_{j_{w_{z}}}}(z) \quad \textrm{for almost all} \quad z \in \Omega.$$

From (\ref{M r eq 4}) we know that
\begin{equation}\label{M r eq 7}
(a(z, \zeta_{k_{j_{w_{z}}}}(z), \nabla \zeta_{k_{j_{w_{z}}}}(z)) - a(z, u(z),\nabla u(z)),\nabla \zeta_{k_{j_{w_{z}}}}(z) - \nabla u(z))_{\mathbb{R}^{N}} \to 0.
\end{equation} 
Now passing to the limit in (\ref{M r eq 7}), using (\ref{M r eq 1}) and the fact that function $a$ is Carath\'{e}odory we obtain for almost all $z \in \Omega$
\begin{equation}\label{M r eq 8}
(a(z, u(z), \beta (z)) - a(z, u(z),\nabla u(z)),\beta (z) - \nabla u(z))_{\mathbb{R}^{N}} = 0.
\end{equation}

Now combining (\ref{M r eq 8}) and condition A2 we get
$$\nabla u(z) = \beta (z) \quad\textrm{for almost all}\quad z \in \Omega.$$
Finally using the Urysohn subsequence principle this proves that
$$\nabla \zeta_{k}(z) \to \nabla  u(z) \quad\textrm{for almost all}\quad z \in \Omega.$$
In the second step we prove the uniform integrability.
We will need the following fact from measure theory.
\begin{remark}\label{M R remark 1}
	 Let function $k \in L^{1}(\Omega)$ then for every $\epsilon > 0$ we can find $\delta(\epsilon) > 0$ such that for every measurable set $C \subset \Omega$ such that $\mu(C) < \delta(\epsilon)$ 
	$$\int_{C}|k(z)|dz < \epsilon.$$
\end{remark}
Let $A > 0$ be such that $\|\nabla \zeta_{k}\|_{p(\cdot)} < A$ for all $k \in \mathbb{N}$. Using the inequality (\ref{M r eq 6}) and H\"{o}lder's inequality for every measurable set $C \subset \Omega$ we have
\begin{eqnarray}\label{M R in}
& &
c_{1}\int_{C}|\nabla \zeta_{k}(z)|^{p}dz\cr
& &
\le \int_{C}l(z)dz +
2\|k_{0}\|_{q(\cdot)}\|\nabla u\|_{p(\cdot)}
+ 2c_{0} \|g_{1}^{r_{1}}\|_{q(\cdot)}\|\nabla u\|_{p(\cdot)}
+ 2c_{0}A\|\nabla u\|_{p(\cdot)}\cr
& &
+ 2A\|k\|_{q(\cdot)}
+ 2c_{0}A\|x^{r_{1}}\|_{q(\cdot)}
+ 2c_{0}A\|\nabla x\|_{p(\cdot)}\cr
& &
 + c_{1}\int_{C}|g_{1}(z)|^{r_{2}(z)}dz +  c_{1}\int_{C}|u(z)|^{r_{2}(z)}dz  + 2\int_{C}|k_{1}(z)|dz.\cr
\end{eqnarray}
Now combining (\ref{M R in}), \ref{M R remark 1} and \ref{N p lemma 1} we finish the proof.
\end{proof}
\begin{remark}
	Under the same assumptions as in \ref{Main theorem} reapeting the proof of Theorem \ref{Main theorem} we see that  S+ property also holds for operator $A: W_{0}^{1, p(\cdot)}(\Omega) \to W^{1, p(\cdot)}_{0}(\Omega)^{*}$ defined by
	$$\langle A(u), v \rangle = \int_{\Omega}(a(z, u(z),\nabla u(z), \nabla v(z))_{\mathbb{R}^{N}}dz \quad \forall u, v \in W^{1, p(\cdot)}_{0}(\Omega).$$
	Let us observe that in this case no regularity condition on boundary $\pO$ is needed.
\end{remark}

\section{Examples of operators satisfying S+ property}

\begin{example}(see \cite{M M Papa} Example 2.2.7 and Proposition 2.72)
	Let $\Omega \subset \mathbb{R}^{N}$ be a bounded domain and $1 < p < +\infty$. Then the operator $A: W_{0}^{1, p}(\Omega) \to W^{1, p}_{0}(\Omega)^{*}$ defined by
	$$\langle A(u), v \rangle = \int_{\Omega}|\nabla u(z)|^{p - 2}(\nabla u(z), \nabla v(z))_{\mathbb{R}^{N}}dz \quad \forall u, v \in W^{1, p}_{0}(\Omega)$$ has S+ property.
\end{example}
\begin{example}(see \cite{M M Papa} Proposition 2.72)
	Let $\Omega \subset \mathbb{R}^{N}$ be a bounded domain with Lipschitz continuous boundary and $1 < p < +\infty$. Then the operator $A: W^{1, p}(\Omega) \to W^{1, p}(\Omega)^{*}$ defined by
	$$\langle A(u), v \rangle = \int_{\Omega}|\nabla u(z)|^{p - 2}(\nabla u(z), \nabla v(z))_{\mathbb{R}^{N}}dz \quad \forall u, v \in W^{1, p}(\Omega)$$ has S+ property. 
\end{example}
\begin{example}(see \cite{M M Papa} Proposition 12.14 and \cite{Gas Papa 1} Proposition 3.1)
	Let $\Omega \subset \mathbb{R}^{N}$ be a bounded set with Lipschitz continuous boundary. Let $a: \Omega \times \mathbb{R}^{N} \to \mathbb{R}^{N}$ be a Carath\'{e}odory function. Moreover the function $a$ satisfies
	\item[(i)] $a$ is strictly monotone on the last variable
	\item[(ii)] $(a(z, \xi), \xi)_{\mathbb{R}^{N}} \ge c_{0}|\xi|^{p}$ for almost all $z \in \Omega$ and for all $\xi \in \mathbb{R}^{N}$
	\item[(iii)] there exist function $a_{0} \in L^{\infty}(\Omega)_{+}$ and a constant $c > 0$ such that
	$$|a(z, \xi)| \le a_{0}(z) + c|\xi|^{p - 1}.$$
	Then the operator $A: W^{1, p}(\Omega) \to W^{1, p}(\Omega)^{*}$ defined by
	$$\langle A(u), v\rangle = \int_{\Omega}(a(z, \nabla u(z)), \nabla v(z))_{\mathbb{R}^{N}}dz \quad \forall u, v \in W^{1, p}(\Omega)$$ has S+ property.
\end{example}

\begin{example}(see \cite{Fan Zhang} Theorem 3.1)
	Let $\Omega \subset \mathbb{R}^{N}$ be a bounded domain and $p \in C(\overline{\Omega})$ with $1 < p(x)$ for every $x \in \overline{\Omega}$. Then the operator $L: W^{1, p(\cdot)}_{0}(\Omega) \to  W^{1, p(\cdot)}_{0}(\Omega)^{*}$ defined by
	$$\langle A(x), y \rangle = \int_{\Omega}|\nabla x(z)|^{p(z) - 2}(\nabla x(z), \nabla y(z))_{\mathbb{R}^{N}}dz \quad \forall u, v \in W^{1, p(\cdot)}(\Omega)$$  
	has S+ property.
\end{example}
\begin{example}(see \cite{Ali Gha Ke} Theorem 3.1)
	Let $\Omega \subset \mathbb{R}^{N}$ ($N \ge 2$) be a bounded smooth domain and $a \in L^{\infty}(\Omega)$ with $p, q \in C(\overline{\Omega})$ and $1 < p(z)$ for all $z \in \overline{\Omega}$. Then we can consider functional $\Psi: W^{1, p(\cdot)}(\Omega) \to \mathbb{R}$ defined by
	$$\Psi(u) = \int_{\Omega}\frac{1}{p(z)}(|\nabla u(z)|^{p(z)} + a(z)|u(z)|^{p(z)})dz \quad \forall u \in W^{1, p(\cdot)}(\Omega).$$
	Then the Fr\'{e}chet derivative $\Psi'$ has S+ property.
\end{example}

\begin{example}(see \cite{Mih Rad} Lemma 3.11)
	Let $\Omega \subset \mathbb{R}^{N}$ be a bounded domain with smooth boundary and function $p \in C(\overline{\Omega})$ and $1 < p(z)$ for every $z \in \overline{\Omega}$. Let function $A: \overline{\Omega} \times \mathbb{R}^{N} \to \mathbb{R}^{N}$ be continuous with continuous derivative with respect to $\xi$ i.e $a(z, \xi) = \nabla_{\xi}A(z, \xi)$. Suppose that functions $a$ and $A$ satisfy the following hypotheses:
	\item[(i)] There exists a positive constant $c_{1}$ such that
	$$|a(z, \xi)| \le c_{1}(1 + |\xi|^{p(z) - 1}),$$
	for all $z \in \overline{\Omega}$ and all $\xi \in \mathbb{R}^{N}$
	\item[(ii)] There exists $k > 0$ such that
	$$A(z, \frac{\xi + \psi}{2}) \le \frac{1}{2}A(z, \xi) + \frac{1}{2}A(z, \psi) - k|\xi - \psi|^{p(z)},$$
	for all $z \in \overline{\Omega}$ and $\xi, \psi \in \mathbb{R}^{N}$,
	\item[(iii)] The following inequalities hold true
	$$|\xi|^{p(z)} \le a(x, \xi)\cdot \xi$$
	for all $x \in \overline{\Omega}$ and $\xi, \psi \in \mathbb{R}^{N}$.
	If hypotheses (i) - (iii) hold then the operator $L: W^{1, p(\cdot)}_{0}(\Omega) \to W^{1, p(\cdot)}_{0}(\Omega)^{*}$ defined by
	$$\langle L(u), v \rangle = \int_{\Omega}(a(u, \nabla u), \nabla v)dz  \quad \forall u, v \in W^{1, p(\cdot)}_{0}(\Omega)$$
	has S+ type property. 
\end{example}

\begin{example}\label{Example multi}(see \cite{Gas Papa} Proposition 4.4.1)
	Let $\Omega \subset \mathbb{R}^{N}$ be a bounded domain with $C^{1}$- boundary $\pO$. Let $a: \Omega \times \mathbb{R} \times \mathbb{R}^{N} \to P_{kc}(\mathbb{R}^{N})$ be a multifunction such that the following assumptions are satisfied:
	\item[(i)] $a$ is graph measurable
	\item[(ii)] for almost all $z \in \Omega$ and all $\zeta \in \mathbb{R}$, the function
	$$\mathbb{R}^{N} \ni \xi \to a(z, \zeta, \xi) \in P_{kc}(\mathbb{R}^{N})$$
	is strictly monotone
	\item[(iii)] for almost all $z \in \Omega$, the function
	$$\mathbb{R} \times \mathbb{R}^{N} \to a(z, \zeta, \xi) \in P_{kc}(\Omega)$$
	has closed graph and for almost all $z \in \Omega$ and all $\xi \in \mathbb{R}^{N}$, the function
	$$\mathbb{R} \ni \zeta \to a(z, \zeta, \xi) \in P_{kc}(\mathbb{R}^{N})$$
	is lower semicontinuous
	\item[(iv)] for almost all $z \in \Omega$, all $\zeta \in \mathbb{R}$, all $\xi \in \mathbb{R}^{N}$ and all $v \in a(z, \zeta, \xi)$, we have that
	$$|v| \le b_{1}(z) + c_{1}(|\zeta|^{p - 1} + |\xi|^{p - 1}),$$
	with $b_{1} \in L^{p^{'}}(\Omega)_{+}, c_{1} > 0, p \in (1, +\infty), \frac{1}{p} + \frac{1}{p^{'}} = 1$
	\item[(v)] for almost all $z \in \Omega$, all $\zeta \in \mathbb{R}$, all $\xi \in \mathbb{R}^{N}$ and all $v \in a(z, \zeta, \xi)$ we have that
	$$(v, \xi)_{\mathbb{R}^{N}} \ge \eta_{1}|\xi|^{p} - \eta_{2}$$
	with $\eta_{1}, \eta_{2} > 0.$
\end{example}

	 Let us consider the operator $V: W^{1, p}_{0} \to P_{wkc}(W^{-1, p'}(\Omega))$. For all $x \in  W^{1, p}_{0}(\Omega)$ we define
	$$V(x) = \{-\divv v: v \in S^{p'}_{a(,x(),\nabla x)}\}.$$
If conditions (i)-(v) are satisfied then the operator $V$ has S+ property.

As a consequence from Example \ref{Example multi} we get S+ property for divergence operator $-\divv a(z, u(z), \nabla u(z))$ with $p$-growth.

We show an application of S+ property.
\section{Application of S+ property}
\subsection{Strong convergence of the Galerkin method}
Let us assume $\Omega \subset \mathbb{R}^{N}$ is a bounded set with Lipschitz continuous boundary $\pO$. We also assume that function $p \in C(\overline{\Omega})$ with $1 < p(z)$ for all $z \in \overline{\Omega}$  and  one of the following conditions is satisfied:
$$p(z) < N \quad \forall z \in \overline{\Omega}$$
$$p(z) = N \quad \forall z \in \overline{\Omega}$$
$$p(z) > N  \quad \forall z \in \overline{\Omega}.$$
We assume that function 
$a: \Omega \times \mathbb{R} \times \mathbb{R}^{N} \to \mathbb{R}^{N}$ is a Carath\'{e}odory function and we consider the following assumptions: 
\begin{description}
	\item[A1]
	There exist constants $c_{0} > 0$ and a function $r_{1} \in C(\overline{\Omega})$ such that
	$1 \le r_{1}(z) < \frac{p^{*}(z)}{q(z)}$ and a function $k_{0} \in L^{q(\cdot)}(\Omega)$ such that for almost all $z \in \Omega$ and for all $(s, \xi) \in \mathbb{R} \times \mathbb{R}^{N}$ we have
	
	$$|a(z, s, \xi)| \le |k_{0}(z)| + c_{0}(|s|^{r_{1}(z)} + |\xi|^{p(z) - 1})$$
\end{description}
\begin{description}
	\item[A2] 
	For almost all $z \in \Omega$ for all $s \in \mathbb{R}$ and for all $\xi, \xi^{'} \in \mathbb{R}^{N}$, $\xi \ne \xi^{'}$
	$$\sum_{i = 1}^{N}(a_{i}(z, s, \xi) - a_{i}(z, s, \xi^{'}))(\xi_{i} - \xi_{i}^{'}) > 0$$
\end{description}
\begin{description}
	\item[A3]
	There exist constants $c_{1}, c_{2} > 0$ and a function $r_{2} \in C(\overline{\Omega})$ such that 
	$1 \le r_{2}(z) < p^{*}(z)$ and a function $k_{1} \in L^{1}(\Omega)$ such that for almost all 
	$z \in \Omega$ and for all $(s, \xi) \in \mathbb{R} \times \mathbb{R}^{N}$ we have
	$$\sum_{i = 1}^{N}a_{i}(z, s, \xi)\xi_{i} \ge c_{1}|\xi|^{p(z)} - c_{2}|s|^{r_{2}(z)} - |k_{1}(z)|$$
\end{description}
Let us consider the operator $A:  W^{1, p(\cdot)}(\Omega) \to  W^{1, p(\cdot)}(\Omega)^{*}$ given by
$$\langle A(u), v \rangle = \int_{\Omega}(a(z, u(z), \nabla u(z)), \nabla v(z))_{\mathbb{R}^{N}}dz \quad \textrm{for all} \quad u, v \in W^{1, p(\cdot)}(\Omega).$$ 
For $f \in W^{1, p(\cdot)}(\Omega)^{*}$ let us consider the following problem
\begin{equation}\label{eq3}
Au = f.
\end{equation}
We can  obtain the theorem
\begin{theorem}
	If conditions A1 - A3 hold then
	\item[(i)](Existence) For every $b \in W^{1, p(\cdot)}(\Omega)^{*}$ the problem (\ref{eq3}) has a solution.
	\item[(ii)](the Galerkin method) For fixed $b \in W^{1, p(\cdot)}(\Omega)^{*}$ and for every $n \in \mathbb{N}$ the $n$-th step in the Galerkin method has a solution $u_{n}$. Then every subsequence $\{u_{n_{k}}\}_{k \in \mathbb{N}}$ which converges weakly converges also strongly to solution $u$ of the problem (\ref{eq3}). If equation (\ref{eq3}) has the only one solution then whole sequence $\{u_{n}\}_{n \in \mathbb{N}}$ converges strongly to solution $u$.
\end{theorem}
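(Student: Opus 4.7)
Part (i) is immediate from the preparatory lemmas. Under A1--A3, Lemma \ref{A L lemma 2} gives that $A$ is bounded, pseudomonotone and coercive. Since $p\in C(\overline{\Omega})$ on the compact set $\overline{\Omega}$, one has $1<p_{-}\le p_{+}<+\infty$, so $W^{1,p(\cdot)}(\Omega)$ is a separable reflexive Banach space. The classical Brezis surjectivity theorem for bounded pseudomonotone coercive operators on a reflexive space (\cite{Zeidler}, p.~589) then produces $u\in W^{1,p(\cdot)}(\Omega)$ with $Au=b$.

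For (ii) the plan is the standard Galerkin scheme fed into Theorem \ref{Main theorem}. Using separability, fix an increasing sequence of finite--dimensional subspaces $\{V_{n}\}_{n\in\mathbb{N}}$ of $W^{1,p(\cdot)}(\Omega)$ whose union is dense. The $n$-th Galerkin problem seeks $u_{n}\in V_{n}$ with $\langle A(u_{n}),v\rangle=\langle b,v\rangle$ for all $v\in V_{n}$. Existence of $u_{n}$ follows from a standard Brouwer-type argument on the finite--dimensional space $V_{n}$ using coercivity of $A$ restricted to $V_{n}$; coercivity together with the Galerkin identity tested with $v=u_{n}$ also yields a uniform bound $\|u_{n}\|\le C$, so there is a subsequence $u_{n_{k}}\rightharpoonup u$ weakly in $W^{1,p(\cdot)}(\Omega)$.

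The heart of the argument is to verify $\limsup_{k}\langle A(u_{n_{k}}),u_{n_{k}}-u\rangle\le 0$. Given any $m$ and any $v\in V_{m}$, the Galerkin identity for $n_{k}\ge m$ gives
\[
\langle A(u_{n_{k}}),u_{n_{k}}-u\rangle \;=\; \langle b,u_{n_{k}}-v\rangle \,+\, \langle A(u_{n_{k}}),v-u\rangle .
\]
Since $\{A(u_{n_{k}})\}$ is bounded in $W^{1,p(\cdot)}(\Omega)^{*}$ (boundedness of $A$ via A1) and $u_{n_{k}}\rightharpoonup u$, taking $\limsup$ in $k$ yields a bound $\langle b,u-v\rangle+C\|v-u\|_{W^{1,p(\cdot)}(\Omega)}$; choosing $v_{m}\in V_{m}$ with $v_{m}\to u$ (possible by density of $\bigcup_{m}V_{m}$) and letting $m\to+\infty$ gives the desired inequality. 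Theorem \ref{Main theorem} then upgrades $u_{n_{k}}\rightharpoonup u$ to strong convergence $u_{n_{k}}\to u$ in $W^{1,p(\cdot)}(\Omega)$. The strong convergence together with A1 (via standard continuity of the Nemytskii operator associated with $a$) lets one pass to the limit in $\langle A(u_{n_{k}}),v\rangle=\langle b,v\rangle$ for $v\in\bigcup_{m}V_{m}$, so $Au=b$ holds on a dense subset and hence on all of $W^{1,p(\cdot)}(\Omega)$. Finally, if $Au=b$ has a unique solution $u$, every weakly convergent subsequence of $\{u_{n}\}$ must converge strongly to this same $u$, and the Urysohn subsequence principle promotes this to strong convergence of the whole sequence.

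The main obstacle is the $\limsup$ estimate above: one must interchange two limits in the correct order, first in the Galerkin index $n_{k}$ with $v\in V_{m}$ held fixed and only afterwards in $m$. This interchange is made legitimate by the uniform $W^{1,p(\cdot)}(\Omega)^{*}$-boundedness of $\{A(u_{n_{k}})\}$ (ultimately a consequence of A1), which lets the error term $\langle A(u_{n_{k}}),v-u\rangle$ be swept to zero by density; the rest of the scheme is a routine assembly of coercivity (for boundedness and finite--dimensional solvability), the Brezis theorem (for the limit equation) and the newly established S+ property (for the strengthening of convergence).
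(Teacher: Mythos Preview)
Your proof is correct and follows the same approach as the paper: verify via Lemma~\ref{A L lemma 2} that $A$ is bounded, pseudomonotone and coercive, invoke Theorem~\ref{Main theorem} for the S+ property, and then feed these into the Brezis/Galerkin framework. The only difference is cosmetic: the paper compresses all of part (ii) into a single citation of Zeidler's Theorem~27.A (which already packages the finite-dimensional solvability, the $\limsup$ estimate and the S+ upgrade), whereas you have written that argument out explicitly.
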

\begin{proof}
	Let us observe that the space $W^{1, p(\cdot)}(\Omega)$ is a real separable and reflexive Banach space with dim  $W^{1, p(\cdot)}(\Omega) = +\infty$ (see \cite{Di Ha Ha Ru} Theorem 8.1.6, p. 249). From Lemma \ref{A L lemma 2} we know that the operator $A$ is well defined, bounded, pseudomonotone and coercive. Moreover from Theorem \ref{Main theorem} we know that the operator $A$ satisfies S+ property. So the thesis is a direct application of the Brezis theorem (see \cite{Zeidler} Theorem 27.A, p. 589) 
\end{proof}

\section{Appendix}
Let $X$ be a normed space.
\begin{definition}\label{Appendix def 9}
	We define
	$$P_{kc}(X) = \{A \subset X: A \quad \textrm{is nonempty, compact, convex}\}$$
	$$P_{wkc}(X) = \{A \subset X: A \quad \textrm{is nonempty, weakly compact, convex}\}.$$
\end{definition}

\begin{definition}\label{Appendix def 10}
	By $L^{0}(\Omega, X)$ we denote the space of Lebesgue measurable functions.
\end{definition}
\begin{definition}\label{Appendix def 11}
	Let $(S, \Sigma, \mu)$ be $\sigma$-finite measure space and $X$ is a separable Banach space. For multifunction $F: \Omega \to 2^{X}\setminus\{\emptyset\}$ we define
	$$S_{F} = \{f \in L^{0}(S, X)\}: f(\omega) \in F(\omega) \quad \textrm{for} \quad \mu-\textrm{almost all} \quad \omega \in S\}$$
	and for $p \in [1, +\infty]$ we define
	$$S_{F}^{p} = S_{F}\cap L^{p}(S, X).$$
\end{definition}
\begin{definition}\label{Appendix def 12}
	The first multivalued term in  (\ref{I eq3}) is interpreted as
	$$\{\divv v \in L^{p^{'}}(\Omega, \mathbb{R}^{N}), v(z) \in a(z, x(z), \nabla x(z)) \quad \textrm{a. a.}\quad z \in \Omega\}.$$
\end{definition}
\begin{definition}\label{Appendix def 13}
	By solution of problem (\ref{I eq3}) we understand $x \in W^{1, p}_{0}(\Omega)$ such that there exist $v \in S^{p^{'}}_{a(\cdot), x(\cdot), \nabla x(\cdot)}$ and $u \in S^{p^{'}}_{\partial j(\cdot, x(\cdot))}$ such that
	$$- \divv v(z) - u(z) = f(z, x(z), \nabla x(z)) \quad \textrm{for a.a} \quad z \in \Omega.$$ 		
\end{definition}
Let $Y$ be a normed space and $F:X \to 2^{Y}$ be a multifunction. For a definition of a graph of multifinction $F$, see definition 1.2.2 in \cite{Gas Papa}. For a definition of upper and lower semicontinuity of multifunction $F$, see definition 1.2.1 a) and b) in \cite{Gas Papa}. For a definition of graph measurability of multifunction $F$, see definition 1.2.6 b) in \cite{Gas Papa}.

Let $A: X \to X^{*}$ be an operator. For a definition of bounded operator, see. For a definition of coercive operator, see definition 2.5 in \cite{Roubicek}. For a definition of pseudomonotone operator, see definition 2.1 iv) in \cite{Roubicek}.

Let $A: X \to 2^{X^{*}}$ be a multivalued operator. For a definition of strictly monotone operator $A$, see definition 1.4.3 b) in \cite{Gas Papa}. For a definition of generalized pseudomonotone operator $A$, see definition 1.4.8 b) in \cite{Gas Papa}. For a definition of domain of operator $A$, see section 1.4.2 in \cite{Gas Papa}.

Let function $\phi: X \to \mathbb{R}$ be proper and convex. For a definition of subdifferential of function $\phi$ at point $x \in X$, see definition 1.3.3 in \cite{Gas Papa}.


\begin{thebibliography}{}
	\bibitem{M M Papa}
	D. Motreanu, V. V. Motreanu, N. Papageorgiou,
	Topological and Variational Methods with Applications to Nonlinear Boundary Value Problems, Springer, 2014
    \bibitem{Zeidler}
    Eberhard Zeidler,
    Nonlinear Functional Analysis and its Applications II/B: Nonlinear Monotone Operators,
    Springer-Verlag New York, 1990
    \bibitem{Fan Zhang}
    Xian-Ling Fan, Qi-Hu Zhang,
    \emph{Existence of solutions for p(x)-Laplacian Dirichlet problem},
    Nonlinear Analysis, 52 (2003), 1843-1852
    \bibitem{Napoli Mariani}
    Pablo De Napoli, Maria Cristina Mariani,
    \emph{Mountain Pass solutions to equations of p-Laplacian type},
    Nonlinear Analysis, 54 (2013), 1205-1219
    \bibitem{Gas Papa}
    L. Gasi\'nski, N. Papageorgiou,
    Nonsmooth Critical Point Theory and Nonlinear Boundary Value Problems, Chapman Hall/CRC, 2005
    \bibitem{Gas Papa 1}
    L. Gasi\'nski, N. Papageorgiou,
    \emph{Existence and Multiplicity of Solutions for Neumann p-Laplacian-Type Equations},
    Advanced Nonlinear Studies, 8 (2008), 843-870
    \bibitem{Mih Rad}
    M. Mihailescu, V. Radulescu,
    \emph{A multiplicity result for a nonlinear degenerate problem arising in the theory of electrorheological fluids},
    Proc. R. Soc. A 462 (2006), 2625-2641
    \bibitem{Ali Gha Ke}
    K. B. Ali, A. Ghanmi, K. Kefi,
    \emph{On the Steklov problem involving the p(x)-Laplacian with indefinite weight},
    Opuscula Math. 37, no. 6 (2017), 779-794
    \bibitem{Di Ha Ha Ru}
    L. Diening, P. Harjulehto, P. Hasta, M. Ruzi\v{c}ka,
    Lebesgue and Sobolev Spaces with Variable Expononets, Springer, 2017
    \bibitem{Bo Mi}
    M. M. Boureanu, M. Mihailescu,
    \emph{Existence and multiplicity of solutions for a Neumann problem involving variable exponent growth conditions}, Glasgow Math J., 50 (2008), 565-574
    \bibitem{Fa Zhao}
    X. Fan, D. Zhao,
    \emph{On the spaces $L^{p(x)}(\Omega)$ and $W^{m, p(x)}(\Omega)$}, Journal of Mathematical Analysis and Applications 263, 424-446 (2001)
    \bibitem{Gas Papa 1}
    L. Gasi\'nski, N. S. Papageorgiou,
    \emph{Positive solutions for the Robin p-Laplacian problem with competing nonlinearities}
    \bibitem{Kufner}
    A. Kufner, O. John, S. Fu\v{c}ik,
    Function Spaces, Noordhoff International Publishing, 1997 
    \bibitem{Roubicek}
    T. Roubi\v{c}ek,
    Nonlinear Partial Differential Equations with Applications, Birkhauser Verlag, 2005
    \bibitem{Lieberman}
    G. M. Lieberman,
    \emph{The natural generalization of the natural conditions of Ladyzhenskaya and Uraltseva for elliptic equations}, Communications in Partial Differential Equations,16, 2013, 311-361
    \bibitem{Lady Ural}
    O. A. Ladyzhenskaya, N. N. Uraltseva,
    Linear and Quasilinear Elliptic Equations, Academic Press, 1968
    \bibitem{Fu Yang}
    Y. Fu, M. Yang,
    \emph{Existence of solutions for quasilinear elliptic systems in divergence form with variable growth}, Journal of Inequalities and Applications, 2014
    \bibitem{Hunger}
    N. Hungerbuhler,
    \emph{Quasilinear elliptic systems in divergence form with weak monotonicity}, New York Journal of Mathematics. 5, 83-90
\end{thebibliography}
\end{document}